\title[The weight and Lindel\"of property]{The weight and Lindel\"of property 
in spaces and topological groups}
\author[M. Tkachenko]{M. Tkachenko}
\address[M. Tkachenko]{\hfill\break
Departamento de Matem\'aticas\hfill\break Universidad Aut\'onoma
Metropolitana\hfill\break Av. San Rafael Atlixco 186,\hfill\break
Col. Vicentina, Del. Iztapalapa, C.P. 09340\hfill\break Mexico D.F.,
Mexico} \email{mich@xanum.uam.mx}
\keywords{Weight; Density; $\om$-narrow; Lindel\"of $\Sigma$-space; Separable; Countably compact}
\subjclass{Primary 54H11, 22A05, 54G10; Secondary 54D20, 54G20, 54C10,
54C45, 54D60}
\DeclareMathOperator{\Inte}{Int}
\def\om{\omega}
\def\cont{\mathfrak c}
\def\sm{\setminus}
\def\emp{\emptyset}
\def\R{\mathbb R}
\def\T{\mathcal T}
\def\T{\mathbb T}
\def\phi{\varphi}
\let\sub=\subseteq
\let\res=\restriction
\newtheorem{thm}{Theorem}[section]
\newtheorem{coro}[thm]{Corollary}
\newtheorem{prop}[thm]{Proposition}
\newtheorem{lemma}[thm]{Lemma}
\newtheorem{problem}[thm]{Problem}
\newtheorem{example}[thm]{Example}
\theoremstyle{definition}
\newtheorem{Def}[thm]{Definition}
\theoremstyle{plain}
\begin{document}
\date{January 24, 2015}

\begin{abstract}
We show that if $Y$ is a dense subspace of a Tychonoff space $X$, then $w(X)\leq nw(Y)^{Nag(Y)}$, where $Nag(Y)$ is the \textit{Nagami number} of $Y$. In particular, if $Y$ is a Lindel\"of $\Sigma$-space, then $w(X)\leq nw(Y)^\omega\leq nw(X)^\omega$.

Better upper bounds for the weight of topological groups are given. For example, if a topological group $H$ contains a dense subgroup $G$ such that $G$ is a Lindel\"of $\Sigma$-space, then $w(H)=w(G)\leq \psi(G)^\om$. Further, if a Lindel\"of $\Sigma$-space $X$ generates a dense subgroup of a topological group $H$, then $w(H)\leq 2^{\psi(X)}$.

Several facts about subspaces of Hausdorff separable spaces are established. It is well known that the weight of a separable Hausdorff space $X$ can be as big as $2^{2^\cont}$. We prove on the one hand that if a \textit{regular} Lindel\"of $\Sigma$-space $Y$ is a subspace of a separable Hausdorff space, then $w(Y)\leq 2^\om$, and the same conclusion holds for a Lindel\"of $P$-space $Y$. On the other hand, we present an example of a countably compact topological group $G$ which is homeomorphic to a subspace of a separable Hausdorff space and satisfies $w(G)=2^{2^\cont}$, i.e.~has the maximal possible weight. 
\end{abstract}

\maketitle

\noindent \textit{MSC (2000):} \small{54H11, 54A25, 54C30}\medskip

%\palabras

\section{Introduction}\label{intro}
It is known that the number of continuous real-valued functions, $|C(X)|$, on a Tychonoff space $X$ is not defined by the weight of $X$, even if $w(X)=2^\om=\cont$ --- it suffices to take as $X_1$ a discrete space $D$ of cardinality $\cont$ and as $X_2$ the one-point compactification of $D$. Then the weights of $X_1$ and $X_2$ coincide, while $|C(X_1)|=2^\cont > \cont=|C(X_2)|$. In any case, the cardinality of $C(X)$ always satisfies $w(X)\leq |C(X)|\leq 2^{d(X)}$, where $d(X)$ is the density of the Tychonoff space $X$.

The upper bound for $|C(X)|$ in the latter inequality is not the best possible. It is shown by Comfort and Hager in \cite{CH} that \emph{every} space $X$ satisfies $|C(X)|\leq w(X)^{wl(X)}$, where $wl(X)$ is the \textit{weak Lindel\"of number} of $X$ (see Subsection~\ref{SubS} below), and that the inequality $w(X)^{wl(X)}\leq 2^{d(X)}$ holds for every \emph{regular} space $X$. In particular, if $X$ has countable cellularity or contains a dense Lindel\"of subspace, then $|C(X)|\leq w(X)^\om$. If in addition $X$ is Tychonoff and the weight of $X$ is equal to $\cont$, then clearly $|C(X)|=\cont$. Thus the number of continuous real-valued functions on a Tychonoff weakly Lindel\"of space $X$ is completely defined by the weight of $X$ provided that $w(X)=\cont$ or, more generally, $w(X)=\kappa^\om$ for an infinite cardinal $\kappa$.

 One of our principal results in Section~\ref{Sec:Con}, Theorem~\ref{Th:2}, states that if $Y$ is a dense subspace of a Tychonoff space $X$, then $|C(X)|\leq nw(Y)^{Nag(Y)}$, where $Nag(Y)$ is the \textit{Nagami number} of the space $Y$ (see Subsection~\ref{SubS}). Therefore, if $X$ is a regular Lindel\"of $\Sigma$-space, then $w(\beta{X})\leq nw(X)^\om$, where $\beta{X}$ is the Stone-\v{C}ech compactification of $X$. In particular, if a regular Lindel\"of $\Sigma$-space $X$ satisfies $nw(X)=\kappa^\om$ for some $\kappa\geq\om$, then $w(X)=nw(X)$ and $w(X)=w(\beta{X})=|C(X)|=\kappa^\om$. Therefore, the cardinality of $C(X)$ is completely defined by the weight of $X$ in this case.  
 
In Section~\ref{Sec:3} we consider topological groups that contain a dense subgroup or a subspace which is a Lindel\"of $\Sigma$-space. Again, our aim is to estimate the weight of the enveloping group in terms of cardinal characteristics of the corresponding dense subgroup or subspace. A typical result there is Theorem~\ref{Th:3} stating that if a Lindel\"of $\Sigma$-group $G$ is a dense subgroup of a topological group $H$, then $w(H)=w(G)\leq \psi(G)^\om$. Similarly, if a Lindel\"of $\Sigma$-space $X$ generates a dense subgroup of a topological group $H$, then $w(H)\leq 2^{\psi(X)}$ (Theorem~\ref{Th:3b}). 

To extend the aforementioned results to wider classes of topological groups we introduce the notion of \textit{$(\kappa,\lambda)$-moderate group}, where $\om\leq\kappa\leq\lambda$. As an application of the new concept we deduce in Corollary~\ref{Cor:4} that every Lindel\"of $\om$-stable topological group $G$ with $\psi(G)\leq\cont$ satisfies $w(G)\leq\cont$.

Our aim in Section~\ref{Sec:4} is to find out what kind of restrictions a \textit{Tychonoff} space $Y$ must satisfy in order that $Y$ be a subspace of a separable \textit{Hausdorff} space $X$. One of the obvious restrictions on $Y$ is the inequality $|Y|\leq |X|\leq 2^\cont$. A less trivial restriction is found in the recent article \cite{LMT}: \textit{If $Y$ is a compact subspace of a separable Hausdorff space $X$, then $w(Y)\leq\cont$.} It is worth noting that the weight of a separable Hausdorff space can be as big as $2^{2^\cont}$ \cite{JK}. Making use of Theorem~\ref{Th:1} we extend the result from \cite{LMT} to Lindel\"of $\Sigma$-spaces: \textit{If a regular Lindel\"of $\Sigma$-space $Y$ is homeomorphic to a subspace of a separable Hausdorff space, then $w(Y)\leq\cont$} (see Theorem~\ref{Th:4}). The same conclusion is valid if $Y$ is a Lindel\"of \textit{$P$-space}, i.e.~all $G_\delta$-sets in $Y$ are open (see Theorem~\ref{Th:5}).

It is established in \cite{LMT} that there are wide classes of topological groups $G$ with the following property: If $G$ is \textit{homeomorphic} to a subspace of a separable Hausdorff space, then $G$ itself is separable and, hence, satisfies $w(G)\leq\cont$. In particular, so is the class of \textit{almost connected pro-Lie groups} which includes connected locally compact groups and their finite or infinite products. In Proposition~\ref{Pro:Emb2} we show that this is not true anymore for countably compact topological groups. In fact, we prove that there exists a countably compact Abelian topological group $G$ homeomorphic to a subspace of a separable Hausdorff space such that $d(G)=2^\cont$ and $w(G)=2^{2^\cont}$, i.e.~$G$ has the maximal possible density and weight.

%%%%%%%%%%%%%%%%%%%%%%%%%%%%%%%%%% 
\subsection{Notation and terminology}\label{SubS}
All spaces considered here are assumed to be Tychonoff if the otherwise is not mentioned explicitly. The exception is Section~\ref{Sec:4}, where we consider Hausdorff spaces. 

By $w(X)$, $nw(X)$, $d(X)$, $l(X)$, $wl(X)$, and $c(X)$ we denote the weight, network weight, density, Lindel\"of number, weak Lindel\"of number, and cellularity of a given space $X$, respectively.
The character and pseudocharacter of $X$ are $\chi(X)$ and $\psi(X)$.

Let $\beta{X}$ be the Stone-\v{C}ech compactification of a Tychonoff space $X$. Denote by $\mathcal{C}$ the family of all closed subsets of $\beta{X}$. We say that a subfamily $\mathcal{F}$ of $\mathcal{C}$ \textit{separates points of $X$ from $\beta{X}\setminus X$} provided that for every pair of points $x\in X$ and $y\in\beta{X}\setminus X$, there exists $F\in\mathcal{F}$ such that $x\in F$ and $y\notin F$. Then we put 
$$
Nag(X)=\min\{|\mathcal{F}|: \mathcal{F}\subset\mathcal{C}\ \mbox{ and } \mathcal{F} \mbox{ separates points of } X \mbox{ from }  \beta{X}\setminus X\}.
$$
If $Nag(X)\leq\omega$, we say that $X$ is a \textit{Lindel\"of $\Sigma$-space} (see \cite[Section~5.3]{AT}). The class of Lindel\"of $\Sigma$-spaces is countably productive and is stable with respect to taking $F_\sigma$-sets and continuous images. 

A space $X$ is called \textit{$\kappa$-stable}, for an infinite cardinal $\kappa$, if every continuous image $Y$ of $X$ which admits a continuous one-to-one mapping onto a space $Z$ with $w(Z)\leq\kappa$ satisfies $nw(Y)\leq\kappa$. If $X$ is $\kappa$-stable for each $\kappa\geq\omega$, we say that $X$ is \textit{stable}. It is known that every Lindel\"of $\Sigma$-space is stable \cite[Proposition~5.3.15]{AT}.

A space $X$ is \textit{weakly Lindel\"of} if every open cover of $X$ contains a countable subfamily whose union is dense in $X$. Every Lindel\"of space as well as every space of countable cellularity is weakly Lindel\"of.

Let $G$ be a topological group. Given an infinite cardinal $\kappa$, we say that $G$ is \textit{$\kappa$-narrow} if for every neighborhood $U$ of the identity in $G$, there exists a subset $C$ of $G$ with $|C|\leq\kappa$ such that $CU=G$ or, equivalently, $UC=G$. The minimum cardinal $\kappa\geq\om$ such that $G$ is $\kappa$-narrow is denoted by $ib(G)$. Every topological group of countable cellularity is $\om$-narrow \cite[Proposition~5.2.1]{AT}, and the same conclusion holds for weakly Lindel\"of topological groups \cite[Proposition~5.2.8]{AT}.

%%%%%%%%%%%%%%%%%%%%%%%%%%%%%%%%%
\section{The weight of Lindel\"of $\Sigma$-spaces}\label{Sec:Con}
By $C(X)$ we denote the family of continuous real-valued functions on a given space $X$.

\begin{thm}\label{Th:1}
The inequalities $w(X)\leq |C(X)|\leq nw(X)^{Nag(X)}$ are valid for every Tychonoff space $X$. 
\end{thm}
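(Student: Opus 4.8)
The plan is to establish the two inequalities $w(X)\le |C(X)|$ and $|C(X)|\le nw(X)^{Nag(X)}$ separately. The first inequality is classical for Tychonoff spaces: the family $C(X)$ separates points from closed sets, so the diagonal map $e\colon X\to\mathbb R^{C(X)}$ is an embedding, whence $w(X)\le w(\mathbb R^{C(X)})=|C(X)|\cdot\om=|C(X)|$ since $X$ is infinite (the finite case being trivial). So the substance is the upper bound on $|C(X)|$.

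For the second inequality, let $\kappa=Nag(X)$ and $\tau=nw(X)$; we may assume both are infinite. First I would fix a network $\mathcal N$ for $X$ with $|\mathcal N|\le\tau$, and a family $\mathcal F\subset\mathcal C$ of closed subsets of $\beta X$ with $|\mathcal F|\le\kappa$ that separates points of $X$ from $\beta X\setminus X$. The idea is that each $f\in C(X)$ extends to a continuous $\beta f\colon\beta X\to\beta\mathbb R$, and a continuous real-valued function on a compact space is determined by its values on a dense set, but more to the point, $f$ itself is coded by combinatorial data of size $\le\tau^\kappa$. Concretely, for a real-valued $f$, knowing $f$ amounts to knowing, for each rational interval $(p,q)$, the set $f^{-1}(p,q)$; since $\mathcal N$ is a network, each open set $f^{-1}(p,q)$ is a union of members of $\mathcal N$, so $f$ is determined by a function from $\mathbb Q\times\mathbb Q$ into $\mathcal P(\mathcal N)$. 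That only gives the crude bound $|C(X)|\le 2^\tau$, so the Nagami number must be used to cut $\mathcal P(\mathcal N)$ down: the role of $\mathcal F$ is to let us replace each open set $f^{-1}(p,q)$ by a union indexed over $\mathcal F$ rather than over all of $\mathcal N$. The key lemma I expect to need is that for $f\in C(X)$ and disjoint closed intervals, the sets $A=f^{-1}((-\infty,p])$ and $B=f^{-1}([q,\infty))$ (with $p<q$) have disjoint closures in $\beta X$, and then one shows each point of $\overline A^{\beta X}$ is separated from each point of $\overline B^{\beta X}\cap X$ by some member of $\mathcal F$; a compactness argument extracts finitely many members of $\mathcal F$ whose union separates $A$ from $B$ inside $X$. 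Thus $f$ is recoverable from: for each pair $p<q$ in $\mathbb Q$, a finite subset of $\mathcal F$ (equivalently an element of $[\mathcal F]^{<\om}$), together with, for each such $F\in\mathcal F$, a choice of how $F\cap X$ relates to the network — but $\mathcal F$ alone has size $\le\kappa$, so the set of all such coding functions has size at most $(|\mathcal F|\cdot\om)^{\om}\le\kappa^{\om}\le nw(X)^{Nag(X)}$, assuming $\kappa\ge\om$. I would organize this as: (i) reduce to coding $f$ by its level sets at rationals; (ii) prove the separation lemma in $\beta X$ using $\mathcal F$; (iii) count.

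The main obstacle I anticipate is step (ii): making precise exactly what combinatorial datum of size $\le\tau^\kappa$ determines $f$, and in particular checking that finitely many elements of $\mathcal F$ suffice to separate $f^{-1}((-\infty,p])$ from $f^{-1}([q,\infty))$ within $X$. The subtlety is that $\mathcal F$ separates points of $X$ from points of $\beta X\setminus X$, not arbitrary points of $\beta X$ from each other, so one has to be careful: the closures of the two level sets in $\beta X$ are disjoint compacta, but they may both meet $\beta X\setminus X$, where $\mathcal F$ gives no control. The resolution is that we only need to separate the level sets as subsets of $X$, and every point $x$ of $X$ lying in (say) the closure of $f^{-1}((-\infty,p])$ is automatically in $f^{-1}((-\infty,p])$ by continuity, so one works with the trace on $X$ throughout and uses a compactness/shrinking argument on the compact set $\overline{f^{-1}((-\infty,p])}^{\beta X}$ against the open-in-$\beta X$ complements of members of $\mathcal F$. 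Once the coding is pinned down the cardinal arithmetic $\tau^{\,\kappa}\le\tau^{\kappa}=nw(X)^{Nag(X)}$ is routine (using $\tau\le 2^{\ldots}$ bounds only if needed, but in fact $|[\mathcal F]^{<\om}|^{\om}\le\kappa^{\om}$ and $\kappa\le\kappa^{\om}\le nw(X)^{Nag(X)}$ suffices).
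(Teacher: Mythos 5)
Your reduction of the theorem to the bound $|C(X)|\le nw(X)^{Nag(X)}$ and your treatment of the first inequality are fine, but the coding argument at the heart of your step (ii) does not work, and the gap is not a technicality. The family $\mathcal F$ separates points of $X$ from points of $\beta X\setminus X$ and gives \emph{no} information about separating two points of $X$ from each other; your key lemma asks precisely for the latter. The cleanest counterexample is a compact $X$: there $\beta X=X$, the single-element family $\mathcal F=\{X\}$ already witnesses $Nag(X)=\omega$, and no member of $\mathcal F$ separates the level sets $A=f^{-1}((-\infty,p])$ and $B=f^{-1}([q,\infty))$, which are genuinely nontrivial subsets of $X$. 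Your ``resolution'' paragraph only restates that one should work with traces on $X$; it does not address this. Relatedly, the count you actually write down is $(|\mathcal F|\cdot\omega)^{\omega}\le Nag(X)^{\omega}$ --- the network $\mathcal N$ and the exponent $Nag(X)$ never genuinely enter --- and that bound is false: for $X=\{0,1\}^{2^{\mathfrak c}}$ one has $Nag(X)=\omega$ but $|C(X)|\ge w(X)=2^{\mathfrak c}>\omega^{\omega}$. So the coding data you describe cannot determine $f$.

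The paper's proof uses $Nag(X)$ in two structurally different ways that a single separation lemma in $\beta X$ cannot reproduce. Working in $C_p(X)$, it takes a dense set $D$ with $|D|\le nw(C_p(X))=nw(X)$; it bounds the tightness of $C_p(X)$ by $\kappa=Nag(X)$ via $l(X^n)\le Nag(X^n)=Nag(X)$ for all $n$ (the Arhangel'skii--Pytkeev theorem); and it uses the $\kappa$-stability of $X$ (a consequence of $Nag(X)=\kappa$) to get $\kappa$-monolithicity of $C_p(X)$, so that the closure of each $B\subset D$ with $|B|\le\kappa$ has cardinality at most $2^{\kappa}$. Then $C_p(X)=\bigcup\{\overline B: B\subset D,\ |B|\le\kappa\}$ yields $|C(X)|\le nw(X)^{\kappa}\cdot 2^{\kappa}=nw(X)^{Nag(X)}$. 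In the compact case this machinery reduces to the fact that every $f\in C(X)$ factors through a second countable continuous image of $X$ --- a ``dependence on countably many coordinates'' phenomenon, not a separation phenomenon --- which is exactly what your coding misses. A direct argument would have to code $f$ by a $\le\kappa$-sized subfamily of a fixed $nw(X)$-sized dense family of functions together with factorization data, which is essentially the tightness-plus-monolithicity argument in disguise.
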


\begin{proof}
 Let $\kappa=Nag(X)$. Denote by $C_p(X)$ the set $C(X)$ endowed with the pointwise convergence topology. It follows from \cite[Theorem~I.1.3]{Ar} that $nw(C_p(X))=nw(X)$. Hence $C_p(X)$ contains a dense subset $D$ with $|D|\leq nw(X)$. Let us note that $l(X^n)\leq Nag(X^n)=Nag(X)=\kappa$ for each integer $n\geq 1$. Therefore the tightness of $C_p(X)$ does not exceed $\kappa$ by \cite[Theorem~II.1.1]{Ar}. Further, every continuous image $Y$ of $X$ satisfies $Nag(Y)\leq Nag(X)$. According to \cite[Proposition~5.3.15]{AT} it now follows from $Nag(X)=\kappa$ that the space $X$ is $\kappa$-stable. Hence the space $C_p(X)$ is \textit{$\kappa$-monolithic} by \cite[Theorem~II.6.8]{Ar}, i.e.~the closure of every subset $B$ of $C_p(X)$ with $|B|\leq\kappa$ has a network of cardinality $\leq\kappa$. In particular, the closure of every subset $B$ of $C_p(X)$ with $|B|\leq\kappa$ has cardinality at most $2^\kappa$. 

Summing up, we can write
$$
C_p(X)=\bigcup\{\overline{B}: B\subset D,\ |B|\leq\kappa\}.
$$
Since there are at most $|D|^\kappa$ subsets $B$ of $D$ satisfying $|B|\leq\kappa$ and the closure of each of them is of cardinality $\leq 2^\kappa$, we infer that $|C_p(X)|\leq nw(X)^\kappa\cdot 2^\kappa= nw(X)^\kappa$. Finally, the family of co-zero sets in $X$ forms a base for $X$, so $w(X)\leq |C(X)|\leq nw(X)^\kappa$. 
\end{proof}

\begin{coro}\label{Cor:1}
If $X$ is a Lindel\"of $\Sigma$-space satisfying $nw(X)\leq\cont$, then $|C(X)|\leq\cont$ and $w(X)\leq\cont$.
\end{coro}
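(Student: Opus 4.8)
The plan is to obtain this as an immediate specialization of Theorem~\ref{Th:1}. By the definition of the Nagami number recalled in Subsection~\ref{SubS}, a Lindel\"of $\Sigma$-space $X$ is precisely one with $Nag(X)\leq\om$. So the chain of inequalities in Theorem~\ref{Th:1} becomes $w(X)\leq |C(X)|\leq nw(X)^{Nag(X)}\leq nw(X)^\om$, and feeding in the hypothesis $nw(X)\leq\cont$ yields $|C(X)|\leq\cont^\om$.

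The remaining ingredient is the cardinal-arithmetic identity $\cont^\om=\cont$, which is just $(2^\om)^\om=2^{\om\cdot\om}=2^\om=\cont$. Combining this with the previous display gives $|C(X)|\leq\cont$ and therefore also $w(X)\leq\cont$, as claimed.

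I do not expect any genuine obstacle: the corollary is a one-line substitution into Theorem~\ref{Th:1}, using only that $Nag(X)$ is countable for Lindel\"of $\Sigma$-spaces and that $\cont$ is closed under countable powers. The sole point worth a moment's care is that the exponentiation $nw(X)^\om$ should be read with $nw(X)\geq\om$; this is automatic for any infinite space, and for a finite space the statement is trivial, so no case distinction is actually needed.
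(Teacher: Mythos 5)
Your proof is correct and is exactly the intended derivation: the paper states this corollary without proof precisely because it is the immediate specialization of Theorem~\ref{Th:1} obtained by substituting $Nag(X)\leq\om$ and $nw(X)\leq\cont$ and invoking $\cont^\om=\cont$. Nothing is missing.
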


The next result is a generalization of Theorem~\ref{Th:1}.

\begin{thm}\label{Th:2}
If $Y$ is a dense subspace of a space $X$, then $|C(X)|\leq nw(Y)^{Nag(Y)}$ and $w(X)\leq nw(Y)^{Nag(Y)}$.
\end{thm}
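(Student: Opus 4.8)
The plan is to reduce Theorem~\ref{Th:2} to Theorem~\ref{Th:1} by comparing the relevant cardinal invariants of $Y$ and $X$. Since $Y$ is dense in $X$, every continuous real-valued function on $X$ is determined by its restriction to $Y$, so the restriction map $C(X)\to C(Y)$ is injective and hence $|C(X)|\leq |C(Y)|$. Applying Theorem~\ref{Th:1} to $Y$ gives $|C(Y)|\leq nw(Y)^{Nag(Y)}$, so $|C(X)|\leq nw(Y)^{Nag(Y)}$. For the weight estimate, I would recall (as in the proof of Theorem~\ref{Th:1}) that the family of cozero sets of $X$ is a base for $X$, and there are at most $|C(X)|$ such sets; therefore $w(X)\leq |C(X)|\leq nw(Y)^{Nag(Y)}$. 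This already yields both inequalities, so strictly speaking no new ingredient beyond Theorem~\ref{Th:1} and the density of $Y$ is needed.

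The one point that requires a small argument is the injectivity of the restriction map, and here is where the Tychonoff hypothesis enters: if $f,g\in C(X)$ agree on the dense set $Y$, then $\{x\in X: f(x)=g(x)\}$ is a closed set containing $Y$, hence equals $X$, so $f=g$. This uses only that $X$ is Hausdorff (so that equalizers of continuous maps are closed), which is part of the standing assumption that all spaces are Tychonoff. I would state this as a one-line observation rather than a separate lemma.

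The only mild subtlety — and the step I would watch most carefully — is making sure the bound is phrased in terms of invariants of $Y$ rather than of $X$, i.e.\ resisting the temptation to invoke $nw(X)$ or $Nag(X)$, which need not be controlled by the corresponding invariants of a dense subspace. The proof deliberately routes everything through $Y$: apply Theorem~\ref{Th:1} to $Y$, not to $X$. With that in mind the argument is essentially immediate, and I expect the author's proof to be just a few lines consisting of the restriction-injectivity observation, the citation of Theorem~\ref{Th:1} for $Y$, and the cozero-base remark for the weight.
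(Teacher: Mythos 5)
Your proposal is correct and coincides with the paper's own proof: the author likewise observes that density of $Y$ makes the restriction map $C(X)\to C(Y)$ injective, applies Theorem~\ref{Th:1} to $Y$, and gets the weight bound from the cozero-set base exactly as in the proof of Theorem~\ref{Th:1}. No further comment is needed.
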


\begin{proof}
As in the proof of Theorem~\ref{Th:1}, it suffices to verify that $|C(X)|\leq nw(Y)^{Nag(Y)}$. Consider the restriction mapping $r\colon C(X)\to C(Y)$, where $r(f)$ is the restriction of $f\in C(X)$ to the subspace $Y$ of $X$. Since $Y$ is dense in $X$, the mapping $r$ is one-to-one. Hence $|C(X)|\leq |C(Y)|$. To finish the proof it suffices to apply Theorem~\ref{Th:1} to $Y$ in place of $X$.
\end{proof}

\begin{coro}\label{Cor:2}
Suppose that a Tychonoff space $X$ with $nw(X)\leq\cont$ contains a dense Lindel\"of $\Sigma$-subspace. Then $|C(X)|\leq \cont$ and $w(X)\leq\cont$. In particular, the Stone--\v{C}ech compactification $\beta{X}$ of $X$ satisfies $w(\beta{X})\leq\cont$.
\end{coro}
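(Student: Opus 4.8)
The plan is to derive Corollary~\ref{Cor:2} directly from Theorem~\ref{Th:2} with essentially no extra work beyond unpacking definitions. Let $Y$ be a dense Lindel\"of $\Sigma$-subspace of $X$. By hypothesis $nw(X)\leq\cont$, and since network weight does not increase when passing to a subspace, we have $nw(Y)\leq nw(X)\leq\cont$. Because $Y$ is a Lindel\"of $\Sigma$-space, $Nag(Y)\leq\om$ by the definition in Subsection~\ref{SubS}.

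Now apply Theorem~\ref{Th:2} to the dense subspace $Y$ of $X$: this gives $|C(X)|\leq nw(Y)^{Nag(Y)}\leq \cont^\om=\cont$ and $w(X)\leq nw(Y)^{Nag(Y)}\leq\cont^\om=\cont$. That settles the first two inequalities.

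For the final assertion about $\beta{X}$, recall that a base for $\beta{X}$ is obtained from the cozero-sets of $X$, or more directly that $w(\beta{X})=w(X)$ whenever $X$ is a dense subspace of the compact space $\beta{X}$ and $w(X)$ is infinite; indeed $w(\beta{X})\leq |C(X)|\leq\cont$ follows from the same cozero-set argument used at the end of the proof of Theorem~\ref{Th:1}, applied to $\beta{X}$ together with $C(\beta{X})\cong C^*(X)\subseteq C(X)$. Either way, $w(\beta{X})\leq\cont$.

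There is really no obstacle here: the only point requiring a moment's care is that $Nag$ and $nw$ behave monotonically under the relevant operations ($nw$ under subspaces, $Nag(Y)\le\om$ for Lindel\"of $\Sigma$-spaces), after which the arithmetic $\cont^\om=\cont$ closes the argument. If one wants the $\beta X$ clause to be fully self-contained, the mild subtlety is justifying $w(\beta X)\le |C^*(X)|\le |C(X)|$, which is immediate from the fact that cozero-sets of $\beta X$ form a base and each is determined by a bounded continuous function on $X$.
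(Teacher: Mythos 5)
Your derivation of $|C(X)|\leq\cont$ and $w(X)\leq\cont$ is correct and is exactly the paper's route: apply Theorem~\ref{Th:2} to the dense Lindel\"of $\Sigma$-subspace $Y$, using $nw(Y)\leq nw(X)\leq\cont$ and $Nag(Y)\leq\om$. For the $\beta{X}$ clause the paper is slicker: since $Y$ is dense in $X$ and $X$ is dense in $\beta{X}$, the space $Y$ is a dense Lindel\"of $\Sigma$-subspace of $\beta{X}$ itself, so Theorem~\ref{Th:2} applied to the pair $(Y,\beta{X})$ immediately gives $w(\beta{X})\leq nw(Y)^{Nag(Y)}\leq\cont$ with no further argument. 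Your second justification --- $w(\beta{X})\leq|C(\beta{X})|=|C^*(X)|\leq|C(X)|\leq\cont$ via the cozero-set base of $\beta{X}$ and the injectivity of restriction to the dense set $X$ --- is also valid. However, your first justification is false as stated: it is \emph{not} true that $w(\beta{X})=w(X)$ for every infinite Tychonoff $X$ dense in $\beta{X}$; the standard counterexample is $X=\om$ discrete, where $w(\om)=\om$ but $w(\beta\om)=2^{\cont}$. (Lemma~\ref{Le:We} gives such an equality only for dense subspaces of \emph{topological groups}, and $\beta{X}$ is not one.) Since you offer the correct alternative and conclude ``either way,'' the proof survives, but that clause should be deleted or replaced by the paper's one-line observation that $Y$ is dense in $\beta{X}$.
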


\begin{proof}
Notice that a dense subspace of $X$ is dense in $\beta{X}$. Hence the required conclusions follow from Theorem~\ref{Th:2}.
\end{proof}

\begin{coro}\label{Cor:3}
Suppose that a Tychonoff space $X$ contains a dense Lindel\"of $\Sigma$-subspace $Y$. Then the following are equivalent:
\begin{enumerate}
\item[{\rm (a)}] $Y$ admits a continuous bijection onto a space of weight $\leq\cont;$
\item[{\rm (b)}] $X$ admits a continuous bijection onto a space of weight $\leq\cont;$
\item[{\rm (c)}] $nw(Y)\leq\cont;$
\item[{\rm (d)}] $nw(X)\leq\cont;$
\item[{\rm (e)}] $w(Y)\leq\cont;$
\item[{\rm (f)}] $w(X)\leq\cont$.
\end{enumerate}
\end{coro}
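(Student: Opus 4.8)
The plan is to establish the cycle of implications (a)$\Rightarrow$(c)$\Rightarrow$(e)$\Rightarrow$(f)$\Rightarrow$(d)$\Rightarrow$(b)$\Rightarrow$(a), or equivalently to show that each of the six conditions is equivalent to the single statement $nw(Y)\leq\cont$. Since $Y$ is a dense Lindel\"of $\Sigma$-subspace of $X$, it is dense in $X$, so Theorem~\ref{Th:2} applies with this $Y$ and $X$; the whole corollary is essentially a bookkeeping exercise combining that theorem with standard cardinal-function inequalities and the stability of Lindel\"of $\Sigma$-spaces.

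First I would handle the implications that are immediate from Theorem~\ref{Th:2}: from $nw(Y)\leq\cont$ we get $w(X)\leq nw(Y)^{Nag(Y)}\leq\cont^\om=\cont$ (using $Nag(Y)\leq\om$ since $Y$ is a Lindel\"of $\Sigma$-space), which is (f); and (f)$\Rightarrow$(d) is trivial since $nw(X)\leq w(X)$; and (d)$\Rightarrow$(c) is trivial since $Y\subset X$ gives $nw(Y)\leq nw(X)$. Also (e)$\Rightarrow$(c) is immediate from $nw(Y)\leq w(Y)$, and (f)$\Rightarrow$(e) from $nw(Y)\leq w(Y)\leq w(X)$; in fact once we know $nw(Y)\leq\cont$ we obtain $w(Y)\leq\cont$ by applying Theorem~\ref{Th:1} directly to $Y$. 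The trivial implications (b)$\Rightarrow$(a) and (f)$\Rightarrow$(b), (e)$\Rightarrow$(a) also need only the inclusion $Y\subset X$ together with the observation that a continuous bijection of $X$ onto a space of weight $\leq\cont$ restricts to such a map on $Y$.

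The one step that requires an actual idea is closing the loop from a continuous-bijection hypothesis back to a network-weight bound, i.e.\ (a)$\Rightarrow$(c) (and hence (b)$\Rightarrow$(d)). Here I would invoke the fact, cited in the excerpt from \cite[Proposition~5.3.15]{AT}, that every Lindel\"of $\Sigma$-space is stable: if $Y$ admits a continuous bijection onto a space $Z$ with $w(Z)\leq\cont$, then taking $\kappa=\cont$ in the definition of $\cont$-stability (with $Y$ itself as the continuous image of $Y$) yields $nw(Y)\leq\cont$. That is the crux. For (b)$\Rightarrow$(d) I can either run the same stability argument on $X$ if $X$ were known to be Lindel\"of $\Sigma$ (it need not be), so instead I route through $Y$: a continuous bijection of $X$ onto a space of weight $\leq\cont$ restricts to one on the dense subspace $Y$, giving (a), hence (c), hence (f)$\supset$(d).

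The main obstacle is simply making sure the routing is airtight: $X$ is not assumed to be a Lindel\"of $\Sigma$-space, nor even to have countable Nagami number, so every inference that uses stability or Theorem~\ref{Th:1} must be applied to $Y$, while every inference that uses density (Theorem~\ref{Th:2}) transfers a bound from $Y$ up to $X$. Concretely I would organize the proof as the chain $(f)\Leftrightarrow(d)\Leftrightarrow(c)\Leftrightarrow(e)$ first, using Theorems~\ref{Th:1} and~\ref{Th:2} and the trivial monotonicity inequalities among $nw$ and $w$, then prove $(a)\Leftrightarrow(c)$ using stability of $Y$ together with the obvious fact that $nw(Y)\leq\cont$ produces a continuous bijection onto a space of weight $\leq nw(Y)\leq\cont$, and finally prove $(b)\Leftrightarrow(f)$ by restricting bijections to $Y$ in one direction and by $w(X)\leq\cont$ trivially giving a bijection onto $X$ itself in the other. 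No hard estimates are involved; the care is entirely in not accidentally assuming a property of $X$ that only $Y$ enjoys.
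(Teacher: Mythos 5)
Your proposal is correct and follows essentially the same route as the paper: the only nontrivial step in both is closing the loop from the continuous-bijection hypothesis on $Y$ back to $nw(Y)\leq\cont$ via the stability of Lindel\"of $\Sigma$-spaces (the paper phrases this as $nw(Y)\leq Nag(Y)\cdot iw(Y)$ from \cite[Proposition~5.3.15]{AT}), after which Theorem~\ref{Th:2} transfers the bound to $w(X)$ and all remaining implications are trivial monotonicity or restriction arguments. The paper organizes the easy implications so that only (a)$\implies$(f) remains, rather than your six-step cycle, but the content is identical.
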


\begin{proof}
The implications (b)$\implies$(a), (d)$\implies$(c), and (f)$\implies$(e) are evident. The validity of the implications (f)$\implies$(d)$\implies$(b) and (e)$\implies$(c)$\implies$(a) is also clear. So it suffices to verify that (a) implies (f). 

Suppose that $Y$ admits a continuous one-to-one mapping onto a Tychonoff space of weight $\leq\cont$, i.e.~$iw(Y)\leq\cont$. Then $nw(Y)\leq Nag(Y)\cdot iw(Y)\leq\cont$ by \cite[Proposition~5.3.15]{AT}. Applying Theorem~\ref{Th:2} we deduce that $w(X)\leq nw(Y)^{Nag(Y)}\leq\cont^{\omega}=\cont$.
\end{proof}

%%%%%%%%%%%%%%%%%%%%%%%%%%%%%%%
\section{The case of topological groups}\label{Sec:3}
Now we apply Theorems~\ref{Th:1} and~\ref{Th:2} to topological groups. The following lemma is a part of the topological group folklore.

\begin{lemma}\label{Le:We}
If $X$ is a dense subspace of a topological group $G$, then $w(X)=w(G)$.
\end{lemma}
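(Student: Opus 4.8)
The plan is to prove that $w(X) = w(G)$ whenever $X$ is dense in a topological group $G$. One inequality is free: since $X$ is a subspace of $G$, we always have $w(X) \leq w(G)$. So the real content is the reverse inequality $w(G) \leq w(X)$, and the strategy is to build a base for $G$ out of translates of a base for $X$, using the homogeneity of $G$ supplied by its group structure.

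First I would fix a base $\mathcal{B}$ for $X$ with $|\mathcal{B}| = w(X) =: \kappa$, and also fix a base $\mathcal{N}$ of open neighborhoods of the identity $e$ in $G$ with $|\mathcal{N}| = \chi(G)$; note that $\chi(G) \leq w(G)$, but more usefully I would observe that since $X$ is dense in $G$, a standard argument shows $\chi(G) \leq w(X)$ — indeed, the traces on $X$ of a local base at $e$, together with density, let one recover a local base at $e$ of size at most $w(X)$, or one can simply invoke that for topological groups $w(G) = \chi(G)\cdot d(G)$ and $d(G) \leq d(X) \leq w(X)$, which already does most of the work. In fact the cleanest route is: for a topological group $w(G) = \chi(G)\cdot d(G)$ (a well-known identity, e.g. in \cite{AT}), so it suffices to bound both factors by $w(X)$. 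Density of $X$ in $G$ gives $d(G) \leq d(X) \leq w(X)$ immediately.

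The remaining point is $\chi(G) \leq w(X)$. Here I would argue directly: pick any point $x_0 \in X$ (using density, $X \neq \emptyset$ when $G \neq \emptyset$). The family $\{ x_0^{-1} B : B \in \mathcal{B},\ x_0 \in B\}$ — wait, one must be careful since $x_0^{-1}B$ need not be open in $G$, only in $x_0^{-1}X$. Better: for each open $U \ni x_0$ in $G$, density gives that $\{B \in \mathcal{B} : x_0 \in B \subseteq U \cap X\}$ has union $U \cap X$, and since $U\cap X$ is dense in $U$, this shows the traces on $X$ of a local base at $x_0$ in $G$ have size $\leq w(X)$; translating by $x_0^{-1}$ and using that left translation is a homeomorphism of $G$ carrying $x_0$ to $e$, we get that $G$ has a local base at $e$ whose traces on the dense set $x_0^{-1}X$ generate it — and a routine computation with a neighborhood $V$ of $e$ satisfying $VV \subseteq U$ shows $\{ \text{int}_G(\overline{W}) : W \text{ trace} \}$ or simply $\{ WW^{-1} \}$-type manipulations recover a genuine local base in $G$ of cardinality $\leq w(X)$. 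Thus $\chi(G) \leq w(X)$, and combining, $w(G) = \chi(G)\cdot d(G) \leq w(X)$.

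The main obstacle is the technical nuisance that the trace of a base for $X$ consists of sets open in $X$ but not in $G$, so one cannot naively transplant them; the fix is the $VV \subseteq U$ "regularity-style" trick available in any topological group, together with translation homogeneity to move everything to the identity. I expect the cleanest write-up to lean on the two standard facts $w(G) = \chi(G)d(G)$ and $d(G) \leq d(X)$ for $X$ dense in $G$, reducing the whole lemma to the local statement $\chi(G) \leq \psi\text{-type estimates} \leq w(X)$, which is exactly the place where density plus the group structure do the work.
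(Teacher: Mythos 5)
Your proof is correct and follows essentially the same route as the paper: both reduce to a product formula for the weight of a topological group and bound each factor via density, with the key local step being $\chi(G)\leq\chi(x_0,X)\leq w(X)$ for a point $x_0$ of the dense subspace (the paper cites this as \cite[2.1.C(a)]{Eng} after translating $e$ into $X$, while you resketch it with the $VV\subseteq U$ trick). The only cosmetic difference is that the paper uses $w(G)=ib(G)\cdot\chi(G)$ and bounds $ib(G)\leq wl(G)\leq l(X)\leq w(X)$, whereas you use $w(G)=\chi(G)\cdot d(G)$ and bound $d(G)\leq d(X)\leq w(X)$; both identities are standard and the arguments are interchangeable.
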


\begin{proof}
It is clear that $w(X)\leq w(G)$, so we verify only that $w(G)\leq w(X)$. According to \cite[Proposition~5.2.3]{AT} we have that $w(G)=ib(G)\cdot \chi(G)$. Let $e$ be the identity element of $G$. Since $G$ is homogeneous, we can assume without loss of generality that $e\in X$. It follows from the regularity of the space $G$ and the density of $X$ in $G$ that $\chi(e,G)=\chi(e,X)$ (see \cite[2.1.C(a)]{Eng}). Hence $\chi(G)\leq w(X)$. Since $X$ is dense in $G$, every open cover of $G$ contains a subfamily of cardinality at most $l(X)$ whose union is dense in $G$, i.e.~$wl(G)\leq l(X)\leq w(X)$. According to \cite[Proposition~5.2.8]{AT}, we see that $ib(G)\leq wl(G)$, so $ib(G)\leq w(X)$. Summing up, $w(G)=ib(G)\cdot \chi(G)\leq w(X)$.
\end{proof}

\begin{thm}\label{Th:3}
Let $G$ be a dense subgroup of a topological group $H$. If $G$ is a Lindel\"of $\Sigma$-group, then $w(H)=w(G)\leq\psi(G)^\om$.
\end{thm}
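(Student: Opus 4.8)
The plan is to reduce $w(H)\le\psi(G)^\om$ to the established inequalities. First, since $G$ is dense in $H$, Lemma~\ref{Le:We} gives $w(H)=w(G)$, so it suffices to bound $w(G)$. Next, because $G$ is a Lindel\"of $\Sigma$-group, it is $\om$-narrow (being Lindel\"of, hence weakly Lindel\"of, see \cite[Proposition~5.2.8]{AT}), and so by \cite[Proposition~5.2.3]{AT} we have $w(G)=ib(G)\cdot\chi(G)=\chi(G)\cdot\om=\chi(G)$. Therefore the whole problem collapses to showing $\chi(G)\le\psi(G)^\om$ for a Lindel\"of $\Sigma$-group $G$.

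To get at $\chi(G)$, the idea is to pass through the weight by way of a compactification. Let $\kappa=\psi(G)$ and let $\beta{G}$ be the Stone--\v{C}ech compactification of $G$. The key is to produce a dense subspace of $G$ of small network weight so that Theorem~\ref{Th:2} applies. A Lindel\"of $\Sigma$-space of pseudocharacter $\le\kappa$ has network weight $\le\kappa^\om$: indeed, using a countable family $\mathcal F$ of closed sets of $\beta{G}$ separating points of $G$ from $\beta{G}\sm G$ together with a family of $\kappa$-many $G_\delta$-sets (in fact cozero sets, using regularity) collapsing each point, one builds a network for $G$ of cardinality $\le\om\cdot\kappa^\om=\kappa^\om$; equivalently, $G$ admits a continuous bijection onto a Tychonoff space of weight $\le\kappa$ (take the diagonal of countably many continuous real-valued functions realizing the $G_\delta$'s at each point, glued over the $\kappa$-many points), so $iw(G)\le\kappa$, whence $nw(G)\le Nag(G)\cdot iw(G)\le\om\cdot\kappa=\kappa$ by \cite[Proposition~5.3.15]{AT} — wait, more carefully, $iw(G)\le 2^\psi$ in general, so one should argue $nw(G)\le\psi(G)^\om$ directly from the network construction above. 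With $nw(G)\le\kappa^\om$ in hand, apply Theorem~\ref{Th:2} with $Y=G$ dense in $X=\beta{G}$: since $G$ is dense in $\beta{G}$, we get
\[
w(\beta{G})\le nw(G)^{Nag(G)}\le(\kappa^\om)^\om=\kappa^\om=\psi(G)^\om.
\]
Then $w(G)\le w(\beta{G})\le\psi(G)^\om$, and combined with $w(H)=w(G)$ this finishes the proof.

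The step I expect to be the main obstacle is the estimate $nw(G)\le\psi(G)^\om$ for a Lindel\"of $\Sigma$-group (equivalently, controlling the network weight purely in terms of pseudocharacter). The subtlety is that pseudocharacter is a pointwise/local invariant while network weight is global; the homogeneity of the topological group is essential here, letting one fix a single $G_\delta$-diagonal-type family at the identity of size $\psi(G)$ and translate it, and then intersect with the countable Nagami family. One must be careful that the resulting family is genuinely a network for $G$ and not merely for a quotient, and that cardinal arithmetic $\om\cdot\psi(G)^\om=\psi(G)^\om$ is applied at the right moment (note $\psi(G)$ may be finite only in the trivial discrete case, and $\psi(G)\ge\om$ otherwise, so $\psi(G)^\om$ absorbs the countable factors). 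Once this network bound is secured, the remaining steps — Lemma~\ref{Le:We}, $\om$-narrowness, and the appeal to Theorem~\ref{Th:2} over $\beta{G}$ — are routine.
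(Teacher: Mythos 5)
Your overall architecture (reduce everything via Lemma~\ref{Le:We} to bounding $w(G)$, then bound $nw(G)$ and feed it into Theorem~\ref{Th:1}/\ref{Th:2}) matches the paper's, but the load-bearing step is missing. Everything hinges on the estimate $nw(G)\leq\psi(G)^\om$, and the justification you offer for it does not work. Your first version of the claim --- that \emph{any} Lindel\"of $\Sigma$-space of pseudocharacter $\leq\kappa$ has network weight $\leq\kappa^\om$ via intersecting the countable Nagami family with $\kappa$-many $G_\delta$-sets collapsing each point --- is false: the paper's own example after Theorem~\ref{Th:3b} (the Alexandroff duplicate $X$ of the Cantor cube $2^\kappa$) is a compact, hence Lindel\"of $\Sigma$, space with $\psi(X)=\kappa$ and $nw(X)=w(X)=2^\kappa$, and $2^\kappa>\kappa^\om$ is consistent (e.g.\ $\kappa=\om_1$ under CH). For such spaces one only gets $nw(X)\leq |X|\leq 2^{\psi(X)}$, which is exactly why Theorem~\ref{Th:3b} has the weaker bound $2^{\psi(X)}$. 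You then notice the problem (``wait, more carefully, $iw(G)\leq 2^\psi$ in general'') and retreat to ``argue $nw(G)\leq\psi(G)^\om$ directly from the network construction above,'' but that construction is precisely the false general claim; and your closing paragraph defers the whole difficulty to ``homogeneity'' without actually producing the network or the condensation.

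The correct route uses the group structure in an essential and specific way, not mere homogeneity. Since $G$ is Lindel\"of it is $\om$-narrow, and for an $\om$-narrow topological group a pseudobase at the identity of size $\kappa=\psi(G)$ can be refined to a weaker \emph{group} topology of weight $\leq\kappa$; that is, there is a continuous isomorphism of $G$ onto a Hausdorff topological group $K$ with $w(K)\leq\kappa$ (\cite[Proposition~5.2.11]{AT}). Then $\kappa$-stability of Lindel\"of $\Sigma$-spaces (\cite[Proposition~5.3.15]{AT}) upgrades this condensation to $nw(G)\leq\kappa$ --- note: $\leq\kappa$, not merely $\leq\kappa^\om$ --- after which Theorem~\ref{Th:1} gives $w(G)\leq nw(G)^{Nag(G)}\leq\kappa^\om$ and Lemma~\ref{Le:We} finishes. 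Your detour through $\beta{G}$ and Theorem~\ref{Th:2}, and the reduction $w(G)=\chi(G)$ via $ib(G)=\om$, are harmless but unnecessary. As written, though, the proposal has a genuine gap at its central step.
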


\begin{proof}
It is clear that the Lindel\"of group $G$ is $\om$-narrow. Applying \cite[Proposition~5.2.11]{AT} we can find a continuous isomorphism (not necessarily a homeomorphism) $f\colon G\to K$ onto a Hausdorff topological group $K$ satisfying $w(K)\leq\kappa$, where $\kappa=\psi(G)$. Since every Lindel\"of $\Sigma$-space is stable (see Proposition~5.3.15 or Corollary~5.6.17 of \cite{AT}), we conclude that $nw(G)\leq\kappa$. Therefore Theorem~\ref{Th:1} implies that $w(G)\leq\kappa^\om$. Hence $w(H)=w(G)\leq\kappa^\om$, by Lemma~\ref{Le:We}.
\end{proof}

The following result is similar in spirit to Theorem~\ref{Th:3}. In it, we weaken the conditions on $G$ by assuming it to be a \textit{subspace} of $H$. The price of this is that the upper bound for the weight of $H$ goes up to $2^\kappa$.

\begin{thm}\label{Th:3b}
Let $X$ be a subspace of a topological group $H$. If $X$ is a Lindel\"of $\Sigma$-space and generates a dense subgroup of $H$, then $w(X)\leq w(H)\leq 2^{\psi(X)}$.
\end{thm}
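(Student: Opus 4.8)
The strategy is to reduce everything to an application of Theorem~\ref{Th:1} to the subgroup $G=\hull{X}$ of $H$ generated by $X$. Since $X\subset H$, monotonicity of weight gives $w(X)\leq w(H)$; and since $G$ is dense in $H$ by hypothesis, Lemma~\ref{Le:We} yields $w(H)=w(G)$. So it remains to prove $w(G)\leq 2^\kappa$, where $\kappa=\psi(X)$. The first step is to verify that $G$ is a Lindel\"of $\Sigma$-space, so that $Nag(G)=\omega$. Let $e$ be the identity of $H$ and put $Y=X\cup X^{-1}\cup\{e\}$, a Lindel\"of $\Sigma$-subspace of $H$ (a finite union of homeomorphic copies of $X$ together with one point). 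For each $n\geq 1$ the subspace $Y^n$ of $H^n$ is Lindel\"of $\Sigma$, and so is the topological sum $S=\bigoplus_{n\geq 1}Y^n$: one embeds $S$ into $\bigl(\prod_{n\geq 1}Y^n\bigr)\times(\om+1)$ by sending the $n$-th summand onto a copy of $Y^n$ sitting in the $n$-th coordinate of the product and over the isolated point $n$ of $\om+1$, and observes that the image is an $F_\sigma$-subspace of the Lindel\"of $\Sigma$-space $\bigl(\prod_{n\geq 1}Y^n\bigr)\times(\om+1)$ which inherits from it the topology of the topological sum. Since $e\in Y=Y^{-1}$, the multiplication maps $Y^n\to G$, $(y_1,\dots,y_n)\mapsto y_1\cdots y_n$, are jointly surjective onto $G$ and assemble into a continuous map $S\to G$ (the summands of $S$ are clopen), so $G$ is a continuous image of a Lindel\"of $\Sigma$-space and is therefore Lindel\"of $\Sigma$.

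The second step is the estimate $nw(G)\leq 2^\kappa$, which I would deduce from $|G|\leq 2^\kappa$, the singletons forming a network in the $T_1$-space $G$. If $X$ is finite this is trivial, so assume $X$ is infinite; then $|G|=|X|$, and it suffices to show $|X|\leq 2^\kappa$. Being Lindel\"of $\Sigma$, $X$ carries a countable network $\mathcal N$ modulo a cover $\mathcal C$ of $X$ by compact sets. For $x\in X$ choose $K_x\in\mathcal C$ with $x\in K_x$ and set $\mathcal M_x=\{N\in\mathcal N: K_x\subseteq N\}$. From the network property (every open $U\supseteq K_x$ contains some $N\in\mathcal N$ with $K_x\subseteq N$) together with the regularity of $X$ (so that $K_x=\bigcap\{U:U\text{ open},\ U\supseteq K_x\}$) one gets $\bigcap\mathcal M_x=K_x$. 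Hence $X$ is the union of the at most $2^{|\mathcal N|}=\cont$ compact sets of the form $\bigcap\mathcal M$ with $\mathcal M\subseteq\mathcal N$. Each such compact set $K$ satisfies $\psi(K)\leq\psi(X)=\kappa$, hence $\chi(K)=\psi(K)\leq\kappa$ (characters and pseudocharacters coincide at points of a compact Hausdorff space), hence $|K|\leq 2^{\chi(K)}\leq 2^\kappa$ by the classical cardinality bound for compact Hausdorff spaces. Therefore $|X|\leq\cont\cdot 2^\kappa=2^\kappa$.

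Finally, applying Theorem~\ref{Th:1} to $G$ and using $Nag(G)=\om$,
\[
w(G)\leq nw(G)^{Nag(G)}=nw(G)^{\om}\leq(2^\kappa)^{\om}=2^\kappa=2^{\psi(X)},
\]
and together with $w(X)\leq w(H)=w(G)$ this proves the theorem. I expect the cardinality inequality $|X|\leq 2^{\psi(X)}$ for a regular Lindel\"of $\Sigma$-space $X$ to be the only genuine obstacle; the remainder is routine bookkeeping with the closure properties of the class of Lindel\"of $\Sigma$-spaces recalled in Subsection~\ref{SubS}. An alternative that avoids the explicit counting uses that $G$ is $\om$-narrow (being Lindel\"of) and stable, so \cite[Proposition~5.2.11]{AT} gives $nw(G)\leq\psi(G)$ and Theorem~\ref{Th:1} gives $w(G)\leq\psi(G)^\om$; but one still has to bound $\psi(G)\leq|G|\leq 2^{\psi(X)}$, so the same cardinal estimate reappears.
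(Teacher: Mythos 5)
Your proof is correct and follows essentially the same route as the paper: the key step in both is the cardinality bound $|X|\leq 2^{\psi(X)}$, obtained by covering $X$ by at most $\cont$ compact sets indexed by subsets of a countable family, each of size $\leq 2^{\psi(X)}$ since $\chi=\psi$ on compacta, after which $nw(G)\leq|G|\leq 2^{\psi(X)}$ and Theorem~\ref{Th:1} (or~\ref{Th:2}) finish the argument. The only cosmetic differences are that you produce the compact cover from the Nagami ``countable network modulo a compact cover'' characterization rather than from a countable separating family of closed sets in $\beta X$, and that you reprove, rather than cite \cite[Proposition~5.3.10]{AT} for, the fact that the subgroup generated by a Lindel\"of $\Sigma$-subspace is again Lindel\"of $\Sigma$.
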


\begin{proof}
Let $\kappa=\psi(X)$. Denote by $\mathcal{F}$ a countable family of closed sets in the Stone--\v{C}ech compactification $\beta{X}$ of $X$ such that $\mathcal{F}$ separates points $X$ from $\beta{X}\sm X$. For every $x\in X$, let $C(x)=\bigcap\{F\in\mathcal{F}: x\in F\}$. Then $C(x)$ is a compact subset of $X$, for each $x\in X$. Since $|\mathcal{F}|\leq\om$, we see that the family $\mathcal{C}=\{C(x): x\in X\}$ has cardinality at most $\cont$. Every element $C\in\mathcal{C}$ satisfies $\psi(C)\leq\psi(X)=\kappa$, so the compactness of $C$ implies that $\chi(C)=\psi(C)\leq\kappa$. Hence, by Arhangel'skii's theorem, $|C|\leq 2^\kappa$ for each $C\in\mathcal{C}$. Since $X=\bigcup\mathcal{C}$, we see that $|X|\leq |\mathcal{C}|\cdot 2^\kappa = 2^\kappa$. In particular, $nw(X)\leq |X|\leq 2^\kappa$ and the dense subgroup of $H$ generated by $X$, say, $G$ satisfies the same inequality $nw(G)\leq |G|\leq 2^\kappa$. Notice that $G$ is a Lindel\"of $\Sigma$-space, by \cite[Proposition~5.3.10]{AT}. Applying Theorem~\ref{Th:2} and Lemma~\ref{Le:We}, we conclude that $w(X)\leq w(H)=w(G)\leq (2^\kappa)^\om=2^\kappa$. 
\end{proof}

The upper bound on the weight of $H$ in Theorem~\ref{Th:3b} is exact. Indeed, let $\kappa\geq\om$ be a cardinal, $Y=2^\kappa$ the Cantor cube of weight $\kappa$, and $X$ the Alexandroff duplicate of $Y$ (see \cite{Eng1} or \cite{BlT} for more details on the properties of Alexandroff duplicates). Then $\chi(X)=\chi(Y)=\kappa$, while the compact space $X$ contains an open discrete subspace of cardinality $|Y|=2^\kappa$, so $w(X)=2^\kappa$. Denote by $H$ the free topological group over $X$. Then $X$ generates $H$ algebraically and $w(H)\geq w(X)=2^\kappa$. Since $X$ is compact, it is a Lindel\"of $\Sigma$-space. Let us also note that the exact value of the weight of $H$ is $2^\kappa$. 
To see this, we apply the fact that the $\sigma$-compact group $H$ satisfies $nw(H)=nw(X)=w(X)=2^\kappa$ (see \cite[Corollary~7.1.17]{AT}), so Theorem~\ref{Th:1} implies that $w(H)\leq nw(H)^\om=(2^\kappa)^\om=2^\kappa$. Summing up, $w(H)=2^\kappa=2^{\chi(X)}=2^{\psi(X)}$.\smallskip

We do not know, however, whether the inequality $w(H)\leq 2^{\psi(X)}$ in the above theorem can be improved as stated in Theorem~\ref{Th:3}, provided $X$ is dense in $H$:

\begin{problem}\label{Prob:n}
Suppose that a Lindel\"of $\Sigma$-space $X$ is a dense subspace of a topological group $H$. Is it true that $w(H)\leq \psi(X)^\om$? 
\end{problem}

It is easy to see that if $X$ and $H$ are as in Problem~\ref{Prob:n}, then $w(H)=\chi(H)=\chi(X)$. Hence the affirmative answer to the problem would follow if we were able to show that $\chi(x,X)\leq\psi(X)^\om$ for some point $x\in X$.

The next problem is not related directly to the content of this section, but it is close in spirit to Problem~\ref{Prob:n} and is motivated by the famous problem of Arhangel'skii about the cardinality of regular Lindel\"of spaces of countable pseudocharacter.

\begin{problem}\label{Prob:Ar}
Let $X$ be a Lindel\"of space of countable pseudocharacter which is homeomorphic to a dense subspace of a Hausdorff topological group. Is the cardinality of $X$ not greater than $\cont$?
\end{problem}

The requirement on $X$ in the above problem to be a dense subspace of a topological group gives new 
restraints on cardinal characteristics of $X$. For example, such a space $X$ has to satisfy $c(X)\leq\cont$. Indeed, let $G$ be a topological group containing $X$ as a dense subspace. Since $X$ is Lindel\"of, the group $G$ is weakly Lindel\"of and, by \cite[Proposition~5.2.8]{AT}, is $\om$-narrow. According to \cite[Theorem~5.4.10]{AT}, the cellularity of every $\om$-narrow topological group does not exceed $\cont$. As $X$ is dense in $G$, we conclude that $c(X)=c(G)\leq\cont$.\smallskip

Theorems~\ref{Th:3} and~\ref{Th:3b} make it natural to introduce the following definition, with the aim to    extend the two results to wider classes of topological groups.

\begin{Def}\label{Def:1}
Let $G$ be a topological group and $\kappa,\lambda$ infinite cardinals with $\kappa\leq\lambda$. We say that $G$ is \textit{$(\kappa,\lambda)$-moderate} if every continuous homomorphic image $H$ of $G$ with $\psi(H)\leq\kappa$ satisfies $w(H)\leq\lambda$.
\end{Def}

Notice that by Theorem~\ref{Th:3}, every topological group $H$ containing a dense Lindel\"of $\Sigma$-space is $(\kappa,\kappa^\om)$-moderate, for each $\kappa\geq\om$.

In the following proposition we collect a number of well-known results and formulate them in terms of $(\kappa,\lambda)$-moderate groups, as introduced in Definition~\ref{Def:1}. 

\begin{prop}\label{Pro:Mod}
The following are valid for a topological group $G$:
\begin{enumerate}
\item[{\rm (a)}] The group $G$ is $(\kappa,2^{2^\kappa})$-moderate, for each 
                        $\kappa\geq ib(G)$.
\item[{\rm (b)}] If $G$ is compact, then it is $(\kappa,\kappa)$-moderate for each 
                        $\kappa\geq\omega$.
\item[{\rm (c)}] If $G$ is pseudocompact, then it is $(\omega,\omega)$-moderate.
\item[{\rm (d)}] Every Lindel\"of $\Sigma$-group is $(\kappa,\kappa^\om)$-moderate, for each $\kappa\geq\om$.
\end{enumerate}
\end{prop}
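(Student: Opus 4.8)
The plan is to establish each of the four items by invoking standard facts about topological groups, most of which are available in the monograph \cite{AT}, and by reducing items (b) and (c) to item (a) via the observation that compactness is preserved by continuous homomorphisms.

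For item (a): let $\kappa\geq ib(G)$ and suppose $H$ is a continuous homomorphic image of $G$ with $\psi(H)\leq\kappa$. Since $ib$ does not increase under continuous homomorphisms, $ib(H)\leq ib(G)\leq\kappa$, so $H$ is $\kappa$-narrow. Now I would invoke the classical estimate $w(H)\leq 2^{2^{\kappa}}$ for a $\kappa$-narrow group of pseudocharacter at most $\kappa$; this follows by first producing, via \cite[Proposition~5.2.11]{AT}, a continuous isomorphism of $H$ onto a Hausdorff group $K$ with $w(K)\leq\kappa$, hence $|H|\leq|K|\leq 2^{\kappa}$, and then applying the general bound $w(H)\leq 2^{|H|}$. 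Thus $w(H)\leq 2^{2^{\kappa}}$ and $G$ is $(\kappa,2^{2^{\kappa}})$-moderate.

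For items (b) and (c): if $G$ is compact and $H$ is a continuous homomorphic image, then $H$ is compact, so for a compact Hausdorff-reflection argument — or directly, since a compact group of countable (more generally, $\kappa$-bounded) pseudocharacter has $\chi(H)=\psi(H)$ by compactness, and $w(H)=\chi(H)\cdot ib(H)$ with $ib(H)\leq\kappa$ — we get $w(H)\leq\kappa$; hence $G$ is $(\kappa,\kappa)$-moderate for every $\kappa\geq\omega$. For (c), a pseudocompact group $G$ is $\omega$-narrow (indeed $\omega$-bounded pseudocompact groups are precompact), and a continuous homomorphic image $H$ with $\psi(H)\leq\omega$ is again pseudocompact; a pseudocompact group of countable pseudocharacter is metrizable (its Ra\u\i kov completion is compact metrizable, and $H$ is dense in it), whence $w(H)\leq\omega$, so $G$ is $(\omega,\omega)$-moderate.

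For item (d): this is exactly the content of Theorem~\ref{Th:3} read contrapositively — if $G$ is a Lindel\"of $\Sigma$-group and $H$ is a continuous homomorphic image of $G$ with $\psi(H)\leq\kappa$, then $H$ is again a Lindel\"of $\Sigma$-group (continuous images of Lindel\"of $\Sigma$-spaces are Lindel\"of $\Sigma$-spaces), so by Theorem~\ref{Th:3} applied with $H$ dense in itself we obtain $w(H)\leq\psi(H)^{\om}\leq\kappa^{\om}$; thus $G$ is $(\kappa,\kappa^{\om})$-moderate, and this was already noted in the paragraph following Definition~\ref{Def:1}. The only mild obstacle is making sure in (c) that the pseudocompactness-plus-countable-pseudocharacter implies metrizability argument is cited correctly (it rests on Comfort--Ross and the fact that $G_\delta$-dense subgroups of compact groups are pseudocompact); everything else is routine bookkeeping with the cardinal-function inequalities already recalled in Subsection~\ref{SubS}.
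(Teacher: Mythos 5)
Your proposal is correct and follows essentially the same route as the paper: (a) via $ib(H)\leq ib(G)$, $|H|\leq 2^{\kappa}$ and $w(H)\leq 2^{|H|}$; (b) via $\psi=\chi$ for compact spaces and $w=\chi$ for compact groups; (c) via pseudocompactness of the image plus metrizability; (d) by closure of Lindel\"of $\Sigma$-groups under continuous homomorphic images together with Theorem~\ref{Th:3}. The only cosmetic differences are that in (a) the paper cites \cite[Theorem~5.2.15]{AT} directly for $|H|\leq 2^{ib(H)\cdot\psi(H)}$ rather than routing through \cite[Proposition~5.2.11]{AT}, and in (c) it deduces metrizability from the fact that pseudocompact Tychonoff spaces of countable pseudocharacter are first countable (then Birkhoff--Kakutani) rather than from the Comfort--Ross completion argument; both variants are standard and equally valid.
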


\begin{proof}
(a) Let $\kappa\geq ib(G)$ be a cardinal and $f\colon G\to H$ a continuous homomorphism onto a topological group $H$ satisfying $\psi(H)\leq\kappa$. Then $ib(H)\leq ib(G)\leq\kappa$. Hence $|H|\leq 2^{ib(H)\cdot\psi(H)}\leq 2^\kappa$ by \cite[Theorem~5.2.15]{AT}, and $w(H)\leq 2^{|H|}\leq 2^{2^{\kappa}}$. It follows that $G$ is $(\kappa,2^{2^\kappa})$-moderate.

(b) Every compact space $X$ satisfies $\psi(X)=\chi(X)$, while every compact topological group $H$ satisfies $w(H)=\chi(H)$ \cite[Corollary~5.2.7]{AT}. Combining the two equalities, we obtain the required conclusion.

(c) Suppose that $f\colon G\to H$ is a continuous homomorphism of a pseudocompact group $G$ onto a topological group $H$ of countable pseudocharacter. Then $H$ is also pseudocompact. It is well known that every Tychonoff pseudocompact space of countable pseudocharacter has countable character. Hence $H$ is metrizable by the Birkhoff--Kakutani theorem. Finally we note that a pseudocompact metrizable space is compact and second countable. So $w(H)\leq\om$ and therefore the group $G$ is $(\om,\om)$-moderate.

(d) The class of Lindel\"of $\Sigma$-groups is closed under taking continuous homomorphic images, so the required conclusion follows from Theorem~\ref{Th:3}.
\end{proof}

Since every Lindel\"of $\Sigma$-group is $(\om,\cont)$-moderate, the following result generalizes Theorem~\ref{Th:3}.

\begin{thm}\label{Th:31}
Let $G$ be a Lindel\"of $(\om,\cont)$-moderate topological group. Then $w(G)\leq |C(G)|\leq \psi(G)^\omega$, so $G$ is $(\tau,\tau^\om)$-moderate for each $\tau\geq\om$.
\end{thm}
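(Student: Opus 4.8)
The plan is to reduce the problem to the case of Theorem~\ref{Th:3} by passing to a suitable continuous isomorphic image of $G$. First I would set $\kappa=\psi(G)$ and recall that, since $G$ is Lindel\"of, it is $\om$-narrow, so $ib(G)\leq\om$. By \cite[Proposition~5.2.11]{AT} there is a continuous isomorphism $f\colon G\to K$ onto a Hausdorff topological group $K$ with $w(K)\leq ib(G)\cdot\psi(G)\leq\kappa$. (Here one uses only the standard fact that an $\om$-narrow group of pseudocharacter $\leq\kappa$ admits a continuous injective homomorphism into a product of second-countable groups of length $\leq\kappa$.) In particular $\psi(K)\leq w(K)\leq\kappa$.

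Next I would invoke the hypothesis that $G$ is $(\om,\cont)$-moderate. Strictly speaking this only controls continuous homomorphic images of countable pseudocharacter, so the second step is to upgrade it: observe that if $G$ is $(\om,\cont)$-moderate and Lindel\"of, then $G$ is in fact $(\kappa,\kappa^\om)$-moderate for every $\kappa\geq\om$. Indeed, given a continuous homomorphism $g\colon G\to H$ with $\psi(H)\leq\kappa$, the group $H$ is again Lindel\"of (continuous image of a Lindel\"of space) and $\om$-narrow, so as above it maps by a continuous isomorphism onto a Hausdorff group $L$ with $w(L)\leq\kappa$; writing $L$ as a subgroup of a product $\prod_{\alpha<\kappa}L_\alpha$ of second-countable groups, each of the $\om$-indexed subproducts gives a continuous homomorphic image of $H$ (hence of $G$) of countable pseudocharacter, whose weight is therefore $\leq\cont$ by $(\om,\cont)$-moderateness; taking the diagonal over a cofinal family of countable subsets of $\kappa$ bounds $w(L)$, and then $\chi(L)$, and then $\chi(H)$ via \cite[2.1.C(a)]{Eng}, by $\kappa^\om$, whence $w(H)=ib(H)\cdot\chi(H)\leq\kappa^\om$ by \cite[Proposition~5.2.3]{AT}. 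This yields the final clause of the theorem once the weight bound on $G$ itself is established.

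Now apply the (already upgraded) $(\kappa,\kappa^\om)$-moderateness of $G$ with $\kappa=\psi(G)$ to the identity homomorphism, noting $\psi(G)\leq\kappa$: this already gives $w(G)\leq\kappa^\om=\psi(G)^\om$. The remaining point is the inequality $|C(G)|\leq\psi(G)^\om$. For this I would follow the pattern of Theorem~\ref{Th:1}: it suffices to bound $nw(G)$ by $\psi(G)^\om$, since then Theorem~\ref{Th:1} (applied with $Nag(G)\leq\om$, as $G$ is Lindel\"of... — but here $G$ need not be a $\Sigma$-space, so instead use $|C(G)|\leq w(G)^{wl(G)}\leq w(G)^\om$ from \cite{CH}, the weak Lindel\"of number of the Lindel\"of group $G$ being countable) gives $|C(G)|\leq (\psi(G)^\om)^\om=\psi(G)^\om$. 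Combined with the trivial $w(G)\leq|C(G)|$ (cozero sets form a base), all three quantities are squeezed between $w(G)$ and $\psi(G)^\om$, and the theorem follows.

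The main obstacle I anticipate is the upgrade step in the second paragraph: the definition of $(\kappa,\lambda)$-moderate only speaks about images of \emph{countable} pseudocharacter, so getting a bound on $w(H)$ for $H$ of pseudocharacter $\leq\kappa$ requires genuinely decomposing $H$ (via its $\om$-narrowness and an embedding into a product of metrizable groups) into a $\kappa$-sized family of countable-pseudocharacter images and then reassembling the character bound by a diagonal/cofinality argument. Everything else — the reduction via \cite[Proposition~5.2.11]{AT}, the use of \cite{CH} to pass from weight to $|C(G)|$, and the elementary inequalities $w\leq|C|$, $wl(G)\leq\om$, $ib(G)\leq\om$ — is routine.
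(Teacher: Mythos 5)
There is a genuine gap, and it sits exactly where you anticipated trouble: the ``upgrade'' step in your second paragraph does not work. After passing from $H$ to $L$ via a continuous isomorphism and projecting $L$ to a countable subproduct $\prod_{\alpha\in A}L_\alpha$, the image $p_A(L)$ is a subgroup of a second-countable group and is therefore already second countable; applying $(\om,\cont)$-moderateness to it is vacuous. Worse, the reassembly fails: the diagonal of the $p_A$'s only recovers the topology of $L$, which is \emph{coarser} than that of $H$, and a bound on $\chi(L)$ says nothing about $\chi(H)$. The reference to \cite[2.1.C(a)]{Eng} is about dense subspaces, not about condensations, so it cannot transfer the character bound from $L$ back to $H$. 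If your argument were valid it would show that \emph{every} Lindel\"of group $G$ satisfies $w(G)\leq\psi(G)^\om$ with no moderateness hypothesis at all (the hypothesis is only ever invoked on second-countable images), which would make Theorem~\ref{Th:31}, Corollary~\ref{Cor:4} and the $\om$-stability hypothesis there pointless; this is a strong sign the step cannot be repaired along these lines.

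The point you are missing is that the moderateness hypothesis must be applied to genuine \emph{quotients} $G/N$ carrying the quotient topology, where $N$ is an invariant $G_\delta$-subgroup: such a quotient has countable pseudocharacter but its weight is a priori unknown, and that is precisely where $(\om,\cont)$-moderateness does real work. The paper's proof builds a family $\mathcal{B}$ of at most $\psi(G)^\om$ invariant $G_\delta$-subgroups that is cofinal (under inverse inclusion) in the family of all invariant $G_\delta$-subgroups, uses the $\R$-factorizability of Lindel\"of groups \cite[Theorem~8.1.6]{AT} to show that every $f\in C(G)$ factors through some $\pi_N\colon G\to G/N$ with $N\in\mathcal{B}$, bounds $|C(G/N)|\leq\cont$ via moderateness together with the Comfort--Hager estimate, and then counts: $|C(G)|\leq \psi(G)^\om\cdot\cont=\psi(G)^\om$, whence $w(G)\leq|C(G)|\leq\psi(G)^\om$. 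Your final use of $|C(G)|\leq w(G)^{wl(G)}$ would be fine once $w(G)\leq\psi(G)^\om$ is known, but that bound is exactly what the flawed step was supposed to deliver.
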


\begin{proof}
Let $\kappa=\psi(G)\geq\omega$. There exists a family $\mathcal{P}$ of open symmetric neighborhoods of the identity element $e$ in $G$ such that $\bigcap\mathcal{P}=\{e\}$ and $|\mathcal{P}|=\kappa$. We can assume without loss of generality that for every $U\in\mathcal{P}$, there exists $V\in\mathcal{P}$ such that $V^3\subset U$. Let us call a sequence $\xi=\{U_n: n\in\omega\}\subset \mathcal{P}$ \textit{admissible} if $U_{n+1}^3 \subset U_n$ for each $n\in\omega$. It is clear that $N_\xi=\bigcap\xi$ is a closed subgroup of type $G_\delta$ in $G$, for each admissible sequence $\xi$. However, the subgroups $N_\xi$ are not necessarily invariant in $G$. Denote by $\mathcal{A}$ the family of subgroups $N_\xi$, where $\xi$ ranges over all admissible sequences in $\mathcal{P}$. Then $|\mathcal{A}|\leq |\mathcal{P}|^\om=\kappa^\omega$.\smallskip

\noindent {\bf Claim.} \textit{Let $\mathcal{N}$ be the family of all invariant subgroups of type $G_\delta$ in $G$. Then the family $\mathcal{N}\cap\mathcal{A}$ is cofinal in $\mathcal{N}$ when the latter is ordered by inverse inclusion.}\smallskip

Let us start the proof of Claim with several simple observations. 

a) First, we note that both families $\mathcal{N}$ and $\mathcal{A}$ are closed under countable intersections. 

b) Second, every neighborhood $U$ of $e$ in $G$ contains an element of $\mathcal{A}$ and an element of $\mathcal{N}$. This is clear for $\mathcal{A}$ since $G$ is Lindel\"of. 
Indeed, if $N\setminus U\neq\emptyset$ for each $N\in\mathcal{A}$, then the property of $\mathcal{A}$ mentioned in a) implies that $(G\setminus U)\cap\bigcap\mathcal{A}\neq \emptyset$, which is impossible since $\bigcap\mathcal{A}=\{e\}$. To find an element $N\in\mathcal{N}$ with $N\subset U$, it suffices to note that the group $G$ is $\om$-narrow and apply \cite[Corollary~3.4.19]{AT}. 

c) Third, every element of $\mathcal{N}$ contains an element of $\mathcal{A}$ and vice versa. To verify this, take an arbitrary element  $N\in\mathcal{N}$. Since $N$ is of type $G_\delta$ in $G$, there exists a sequence $\{U_n: n\in\omega\}$ of open sets in $G$ such that $N=\bigcap_{n\in\omega} U_n$. Making use of b) we find, for every $k\in\om$, an element $N_k\in\mathcal{A}$ such that $N_k\sub U_k$. Then by a), $N^*=\bigcap_{n\in\om} N_k$ is in $\mathcal{A}$ and clearly $N^*\subset N$. Conversely, take an element $N_\xi\in\mathcal{A}$, where $\xi=\{U_k: k\in\omega\}$ is an admissible sequence in $\mathcal{P}$. Applying b) once again we find, for every $k\in\omega$, an element $N_k\in\mathcal{N}$ such that $N_k\subset U_k$. Then $N_*=\bigcap_{k\in\om} N_k$ is in $\mathcal{N}$ and $N_*\subset N_\xi$.

We now turn back to the proof of Claim. Given an arbitrary element $N\in\mathcal{N}$, we have to find an element $\widetilde{N}\in\mathcal{N}\cap\mathcal{A}$ satisfying $\widetilde{N}\subset N$. Let $N_0=N$. Using c) we define a sequence $\{N_k: k\in\om\}$ such that $N_{k+1}\subset N_k$, $N_{2k}\in\mathcal{N}$, and $N_{2k+1}\in\mathcal{A}$ for each $k\in\om$. It follows from a) that $\widetilde{N}=\bigcap_{k\in\om} N_{2k}= \bigcap_{k\in\om} N_{2k+1}$ is in $\mathcal{A}\cap \mathcal{N}$. Since $\widetilde{N} \subset N_0=N$, this completes the proof of Claim.\smallskip

Let $\mathcal{B}=\mathcal{A}\cap\mathcal{N}$. Then $|\mathcal{B}|\leq\mathcal{A}\leq \kappa^\om$. For every $N\in\mathcal{B}$, denote by $\pi_N$ the quotient homomorphism of $G$ onto $G/N$. Since every $N\in\mathcal{B}$ is the intersection of an admissible sequence of neighborhoods of $e$ in $G$, it is easy to verify that the corresponding quotient group $G/N$ has countable pseudocharacter. As $G$ is $(\om,\cont)$-moderate, the weight of the quotient group $G/N$ is at most $\cont$. Clearly the group $G/N$ is Lindel\"of. According to \cite[Theorem~2.2]{CH} this implies that the cardinality of the family of continuous real-valued functions on $G/N$ satisfies $|C(G/N)| \leq\cont^\om=\cont$. For every $N\in\mathcal{B}$, let
\[
C_N(G)=\{g\circ\pi_N: g\in C(G/N)\}.
\]
We claim that $C(G)=\bigcup_{N\in\mathcal{B}} C_N(G)$. Indeed, let $f$ be a continuous real-valued function on $G$. Since every Lindel\"of topological group is $\R$-factor\-iz\-able by \cite[Theorem~8.1.6]{AT}, we can find a continuous homomorphism $p\colon G\to H$ onto a second countable Hausdorff topological group $H$ and a continuous real-valued function $h$ on $H$ such that $f=h\circ p$. Let $K$ be the kernel of the homomorphism $p$. It is clear that $K\in\mathcal{N}$. By our Claim, there exists $N\in\mathcal{B}$ with $N\subset K$. Let $\varphi\colon G/N\to H$ be the natural homomorphism satisfying $p=\varphi\circ\pi_N$. The homomorphism $\varphi$ is continuous since so are $\pi_N$ and $p$, while $\pi_N$ is open. Hence $g=h\circ\varphi$ is a continuous real-valued function on $G/N$ which satisfies $g\circ\pi_N=h\circ\varphi\circ\pi_N=h\circ p=f$. This shows that $f\in C_N(G)$, whence the equality $C(G)=\bigcup_{N\in\mathcal{B}} C_N(G)$ follows. Since $|\mathcal{B}|\leq \kappa^\omega$ and $|C_N(G)|\leq\cont$ for each $N\in\mathcal{B}$, we conclude that $|C(G)|\leq \kappa^\omega$. Thus $w(G)\leq |C(G)|\leq\kappa^\omega$.

Since every continuous homomorphic image of a Lindel\"of $(\om,\cont)$-moderate group is again Lindel\"of and $(\om,\cont)$-moderate, the last assertion of the theorem is immediate from the first one.
\end{proof}

We will see in Example~\ref{Exa:2} that \lq{Lindel\"of\rq} in Theorem~\ref{Th:31} cannot be weakened to \lq{weakly Lindel\"of\rq}, or even replaced with \lq{countably compact\rq}.

The next fact is easily deduced from Theorem~\ref{Th:31}.

\begin{coro}\label{Cor:4}
Let $G$ be a Lindel\"of $\om$-stable topological group satisfying $\psi(G)\leq\cont$. Then $w(G)\leq\cont$.
\end{coro}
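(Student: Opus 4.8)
The plan is to derive Corollary~\ref{Cor:4} from Theorem~\ref{Th:31}, so the only real task is to verify that a Lindel\"of $\om$-stable topological group $G$ with $\psi(G)\leq\cont$ is $(\om,\cont)$-moderate; once that is done, Theorem~\ref{Th:31} gives $w(G)\leq|C(G)|\leq\psi(G)^\om\leq\cont^\om=\cont$ immediately. So first I would unpack what $(\om,\cont)$-moderate means here: I must take an arbitrary continuous homomorphism $f\colon G\to H$ onto a topological group $H$ with $\psi(H)\leq\om$ and show $w(H)\leq\cont$.

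The key observation is that $H$, being a Hausdorff topological group of countable pseudocharacter, admits a continuous injective homomorphism onto a metrizable (hence second countable, since $\om$-narrow) topological group. Concretely: $H$ is a continuous homomorphic image of the Lindel\"of group $G$, hence $H$ is Lindel\"of and in particular $\om$-narrow. An $\om$-narrow group of countable pseudocharacter admits a continuous one-to-one homomorphism onto a second countable (metrizable) topological group --- this is the same tool used in the proof of Theorem~\ref{Th:3} (cf.\ \cite[Proposition~5.2.11]{AT}), applied with $\kappa=\om$. Call this group $Z$, so $w(Z)\leq\om$ and there is a continuous bijective homomorphism $H\to Z$.

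Now $H$ is a continuous homomorphic (hence continuous) image of $G$ that admits a continuous one-to-one map onto $Z$ with $w(Z)\leq\om$. By the definition of $\om$-stability applied to $G$, it follows that $nw(H)\leq\om$. (Here I am using exactly the hypothesis that $G$ is $\om$-stable --- this is the step that would fail for a general Lindel\"of group, and it is why the Lindel\"of $\Sigma$ case in Theorem~\ref{Th:3} went through stability as well.) Having $nw(H)\leq\om$, Theorem~\ref{Th:1} applied to $H$ gives $w(H)\leq|C(H)|\leq nw(H)^{Nag(H)}\leq\om^\om=\cont$, since $Nag(H)\leq l(H)\leq\om$ because $H$ is Lindel\"of. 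Thus $w(H)\leq\cont$ for every such quotient $H$, which is precisely the assertion that $G$ is $(\om,\cont)$-moderate.

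Finally, assemble: $G$ is Lindel\"of and $(\om,\cont)$-moderate, so Theorem~\ref{Th:31} yields $w(G)\leq|C(G)|\leq\psi(G)^\om\leq\cont^\om=\cont$. The main (and essentially only) obstacle is pinning down the first half correctly --- namely producing the continuous bijection onto a second countable group and then invoking $\om$-stability in exactly the right form; everything after that is a routine appeal to Theorems~\ref{Th:1} and~\ref{Th:31}. One should double-check that the definition of $\om$-stable in Subsection~\ref{SubS} is stated for continuous images admitting a continuous bijection onto a space (not necessarily group) of weight $\leq\om$, which it is, so no extra care about the group structure of $Z$ is needed beyond knowing it has weight $\leq\om$.
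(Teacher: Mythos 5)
Your argument follows the paper's proof almost exactly: both reduce the corollary to showing that $G$ is $(\om,\cont)$-moderate, both pass from a quotient $H$ with $\psi(H)\leq\om$ to a continuous isomorphism onto a second countable group via \cite[Proposition~5.2.11]{AT}, and both then invoke $\om$-stability to get $nw(H)\leq\om$ before applying Theorem~\ref{Th:31}. The one place you deviate is the final bound on $w(H)$, and there you make a false claim: you justify $Nag(H)\leq\om$ by asserting $Nag(H)\leq l(H)$, which is not true in general --- a Lindel\"of space need not be a Lindel\"of $\Sigma$-space (e.g.\ the Sorgenfrey line, or an uncountable Lindel\"of $P$-space), so the Nagami number is not bounded by the Lindel\"of number. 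Your conclusion survives because at that point you already have $nw(H)\leq\om$, and $Nag(H)\leq nw(H)$ always holds (the closures in $\beta{H}$ of the members of a network separate $H$ from $\beta{H}\sm H$); so you should cite that inequality instead. Alternatively, and more economically, once $nw(H)\leq\om$ you can bypass Theorem~\ref{Th:1} entirely and argue as the paper does: $d(H)\leq nw(H)\leq\om$, hence $w(H)\leq 2^{d(H)}\leq\cont$ since $H$ is a Tychonoff (indeed regular) space.
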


\begin{proof}
We claim that the group $G$ is $(\omega,\cont)$-moderate. Indeed, suppose that $f\colon G\to H$ is a continuous homomorphism of $G$ onto a topological group $H$ of countable pseudocharacter. Then $H$ is also Lindel\"of and hence $\om$-narrow. It now follows from \cite[Proposition~5.2.11]{AT} that there exists a continuous isomorphism $i\colon H\to K$ onto a second countable topological group $K$. Since $G$ is $\om$-stable, we conclude that $d(H)\leq nw(H)\leq\om$. Hence $w(H)\leq 2^{d(H)}\leq\cont$. This proves our claim. To complete the argument it suffices to apply Theorem~\ref{Th:31}.
\end{proof}

As usual, we say that $X$ is a \textit{$P$-space} if every $G_\delta$-set in $X$ is open. Since every regular Lindel\"of $P$-space is $\om$-stable \cite[Corollary~5.6.10]{AT}, we have the following:

\begin{coro}\label{Cor:LP}
A Lindel\"of $P$-group $G$ with $\psi(G)\leq\cont$ satisfies $w(G)\leq\cont$.
\end{coro}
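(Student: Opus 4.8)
The goal is Corollary~\ref{Cor:LP}: a Lindel\"of $P$-group $G$ with $\psi(G)\leq\cont$ satisfies $w(G)\leq\cont$.

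The plan is to deduce this directly from Corollary~\ref{Cor:4}, whose hypotheses are: $G$ Lindel\"of, $G$ is $\om$-stable, and $\psi(G)\leq\cont$. Two of these three hypotheses --- Lindel\"ofness and $\psi(G)\leq\cont$ --- are already assumed, so the only thing to verify is that our Lindel\"of $P$-group $G$ is $\om$-stable. For that, I would invoke the fact stated in the paragraph immediately preceding the corollary: every \emph{regular} Lindel\"of $P$-space is $\om$-stable (\cite[Corollary~5.6.10]{AT}). Since a topological group is (completely) regular, and $G$ is assumed to be a Lindel\"of $P$-space by hypothesis, this applies to $G$ and gives $\om$-stability of $G$.

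So the proof is essentially two sentences: (1) $G$, being a regular Lindel\"of $P$-space, is $\om$-stable by \cite[Corollary~5.6.10]{AT}; (2) since $G$ is moreover Lindel\"of with $\psi(G)\leq\cont$, Corollary~\ref{Cor:4} yields $w(G)\leq\cont$. I would write it exactly in that order.

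There is no real obstacle here; the only point requiring a moment's care is making sure the cited external result about $P$-spaces genuinely applies, i.e. that the relevant separation axiom (regularity) is in force --- which it is, since all groups here are Tychonoff, hence regular --- and that ``$P$-space'' is being used in the same sense (every $G_\delta$-set is open), which it is by the definition just recalled. After that, the conclusion is a direct citation of Corollary~\ref{Cor:4}.

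\begin{proof}
Being a topological group, $G$ is (completely) regular. Thus $G$ is a regular Lindel\"of $P$-space, so it is $\om$-stable by \cite[Corollary~5.6.10]{AT}. Since $G$ is also Lindel\"of and satisfies $\psi(G)\leq\cont$ by hypothesis, Corollary~\ref{Cor:4} applies and gives $w(G)\leq\cont$.
\end{proof}
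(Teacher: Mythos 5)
Your proof is correct and coincides with the paper's intended argument: the paper derives Corollary~\ref{Cor:LP} immediately from Corollary~\ref{Cor:4} together with the fact, cited just before the statement, that every regular Lindel\"of $P$-space is $\om$-stable. Your explicit remark that a topological group is automatically (completely) regular is exactly the detail that makes that citation apply.
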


It is tempting to conjecture, after Proposition~\ref{Pro:Mod} and Theorem~\ref{Th:31}, that the subgroups of compact topological groups (i.e.~precompact groups) are \lq{moderate\rq} in some sense. For example, it might be a plausible conjecture that every precompact group is $(\om,\cont)$-moderate. We show below that this is not the case and that item (a) of Proposition~\ref{Pro:Mod} is the only restriction on precompact groups in this sense. 

\begin{example}\label{Exa:1}
For every cardinal $\tau\geq\omega$, there exists a precompact Abelian group $G$ satisfying $\psi(G)=\tau$, $d(G)=|G|=2^\tau$, and $w(G)=2^{2^\tau}$. 
\end{example}

\begin{proof}
Let $D$ be a discrete space of cardinality $\tau\geq\om$. Denote by $\beta{D}$ the Stone--\v{C}ech compactification of $D$. Then $|\beta{D}|=2^{2^\tau}$. Consider the space $C_p(\beta{D},\T)$ of continuous functions on $\beta{D}$ with values in the compact circle group $\T$. The subscript \lq{$p$\rq} in $C_p(\beta{D},\T)$ means that this space carries the topology of pointwise convergence on elements of $\beta{D}$, i.e.~$G=C_p(\beta{D},\T)$ is identified with a dense subgroup of the compact topological group $\T^{\beta{D}}$. Hence the topological group $G$ is precompact. Since $G$ is dense in $\T^{\beta{D}}$, Lemma~\ref{Le:We} implies that $w(G)=w(\T^{\beta{D}})= |\beta{D}|= 2^{2^\tau}$.

By \cite[Theorem~I.1.4]{Ar}, we have the equalities
$$
\psi(G)=\psi(C_p(\beta{D},\T))=d(\beta{D})=|D|=\tau.
$$ 
It remains to show that $d(G)=|G|=2^\tau$. According to \cite[Theorem~I.1.5]{Ar}, the density of $C_p(\beta{D},\T)$ is equal to $iw(\beta{D})$, where $iw(\beta{D})$ denotes the minimal cardinal $\lambda\geq\om$ such that $\beta{D}$ admits a continuous one-to-one mapping onto a Tychonoff space of weight $\lambda$. Since $\beta{D}$ is a compact space, it is clear that $iw(\beta{D})= w(\beta{D})=2^\tau$. Therefore, $d(G)=2^\tau$. Finally, by the density of $D$ in $\beta{D}$ we see that $|C_p(\beta{D},\T)|\leq 2^{|D|}=2^\tau$, i.e.~$|G|\leq 2^\tau$. Since $2^\tau=d(G)\leq |G|\leq 2^\tau$, the required equality follows.
\end{proof}

Let us note that the equalities $d(G)=|G|=2^\tau$ in Example~\ref{Exa:1} are not accidental, since every precompact (even $\tau$-narrow) topological group $H$ with $\psi(H)\leq\tau$ admits a continuous isomorphism onto a topological group $K$ of weight $\leq\tau$ and, therefore, $|H|=|K|\leq 2^\tau$.\smallskip

Countably compact groups are pseudocompact and hence $(\om,\om)$-moderate, by (c) of Proposition~\ref{Pro:Mod}. It seems that there are no other restrictions on countably compact groups, except for the obvious one in (a) of the same proposition. The next example confirms this at least in part.   

Let us recall that a space $X$ is called \textit{$\om$-bounded} if the closure of every countable subset of $X$ is compact. All $\om$-bounded spaces are countably compact.

\begin{example}\label{Exa:2}
For every infinite cardinal $\tau$ satisfying $\tau^\om=\tau$, there exists an $\om$-bounded topological Abelian group $G$ such that $\psi(G)=\tau$, $d(G)=|G|=2^\tau$, and $w(G)=2^{2^\tau}$. Hence $G$ fails to be $(\tau,2^\tau)$-moderate.
\end{example}

\begin{proof}
Take an infinite cardinal $\tau$ with $\tau^\om=\tau$. Clearly $\tau\geq\cont$. Let $\Pi=2^I$, where $2=\{0,1\}$ is the two-point discrete group and the index set $I$ satisfies $|I|=2^\tau$. Then $\Pi$, endowed with the usual Tychonoff product topology, is a compact group of density at most $\tau$. Let $S$ be a dense subset of $\Pi$ satisfying $|S|\leq\tau$. By \cite[Corollary~1.2]{CS}, there exists a countably compact subspace (even a subgroup) $X$ of $\Pi$ containing $S$ such that $|X|\leq\tau^\om=\tau$. It is clear that $X$ meets every non-empty $G_\delta$-set in $\Pi$. 

Let $\Pi_\om$ be the \textit{$P$-modification} of the space $\Pi$, i.e.~a base of the topology of $\Pi_\om$ consists of $G_\delta$-sets in $\Pi$. Then $X$ is dense in $\Pi_\om$, so $d(\Pi_\om)\leq\tau$. It is also clear that $w(\Pi_\om)\leq w(\Pi)^\om=(2^\tau)^\om=2^\tau$. As in Example~\ref{Exa:1} we consider the space $C_p(\Pi_\om,\T)$ of continuous functions on $\Pi_\om$ with values in the circle group $\T$. Since $\Pi_\om$ is a regular $P$-space, the subgroup $G= C_p(\Pi_\om,\T)$ of $\T^{\Pi_\om}$ is $\om$-bounded (see \cite[Proposition~2.6]{DTT}). As $G$ is dense in $\T^{\Pi_\om}$, we can apply Lemma~\ref{Le:We} to deduce that $w(G)=w(\T^{\Pi_\om})=|\Pi_\om|=2^{2^\tau}$. 

Let us show that $d(G)=|G|=2^\tau$. Take an arbitrary subset $D$ of $G$ with $|D|<2^\tau$. Every element $f\in D$ is a continuous function on $\Pi_\om$ with values in $\T$. It is clear that the topology of $\Pi_\om$ is the \textit{$\aleph_1$-box} topology of $2^I$ as defined in \cite{CG}. Therefore, we can apply the theorem formulated in the abstract of \cite{CG} (with $\kappa=\aleph_1$ and $\alpha=\cont^+$) to find a subset $J_f$ of the index set $I$ with $|J_f|\leq\cont$ such that $f$ \textit{does not depend on $I\sm J_f$} or, equivalently, $f(x)=f(y)$ for all $x,y\in \Pi_\om$ satisfying $x\hskip-2.4pt\res_{J_f}=y\hskip-2.4pt\res_{J_f}$. Then the subset $J=\bigcup_{f\in D} J_f$ of $I$ satisfies $|J|\leq |D|\cdot\cont<2^\tau$. Take an element $i\in I\sm J$ and points $x,y\in \Pi_\om$ such that $x_i\neq y_i$ and $x_j=y_j$ for each $j\in I$ distinct from $i$. It follows from our definition of $J$ that $f(x)=f(y)$ for all $f\in D$. Therefore, if $f\in G$ and $f(x)\neq f(y)$, then $f\notin\overline{D}$. This proves that the density of $G$ is at least $2^\tau$. Further, since $nw(\Pi_\om)\leq w(\Pi_\om)\leq 2^\tau$, it follows from \cite[Theorem~I.1.3]{Ar} that $nw(G)= nw(\Pi_\om)\leq w(\Pi_\om)\leq 2^\tau$. Hence $d(G)\leq nw(G)\leq 2^\tau$. Combining the two inequalities for $d(G)$, we conclude that $d(G)=2^\tau$. Since $d(\Pi_\om)\leq\tau$, the cardinality of $C_p(\Pi_\om,\T)$ is not greater than $2^\tau$. As in Example~\ref{Exa:1} we deduce the equalities $d(G)=|G|=2^\tau$.

Finally, denote by $r_X$ the restriction mapping of $C_p(\Pi_\om,\T)$ to $C_p(X,\T)$, where $r_X(f)=f\hskip -3.5pt\res_X$ for each $f\in C_p(\Pi_\om,\T)$. Since $X$ is dense in $\Pi_\om$, $r_X$ is a continuous monomorphism. It is clear that $C_p(X,\T)$ is a subspace of $\T^X$, so $\psi(C_p(X,\T))\leq |X|\leq\tau$. As $r_X$ is a continuous monomorphism, we see that $\psi(G)\leq\psi(C_p(X,\T)\leq\tau$.
\end{proof}

%%%%%%%%%%%%%%%%%%%%%%%%%%%%%%%%%%%%%%%%%%%
\section{Subspaces of separable Hausdorff spaces}\label{Sec:4}%%%
If $X$ is a separable \textit{regular} space, then $w(X)\leq\cont$ by \cite[Theorem~1.5.6]{Eng} and, hence, every subspace $Y$ of $X$ satisfies the same inequality $w(Y)\leq\cont$. However, there exists a separable \textit{Hausdorff} space $Z$ such that $\chi (z_0,Z) = 2^{2^\cont}$ for some point $z_0\in Z$ (see \cite{JK}). We see in particular that $w(Z)= 2^{2^\cont}$. 

It turns out, however, that \lq\lq{good\rq\rq} subspaces of separable Hausdorff spaces have a small weight. It is shown in \cite[Lemma~3.4]{LMT} that every compact subspace of a separable Hausdorff space has weight at most $\cont$. Further, according to \cite[Theorem~3.9]{LMT}, if an \emph{almost connected} pro-Lie group $G$ is homeomorphic with a subspace of a separable Hausdorff space, then $G$ itself is separable and has weight at most $\cont$. (A pro-Lie group $G$ is almost connected if it contains a compact invariant subgroup $N$ such that the quotient group $G/N$ is connected, see \cite{HM2}.) In particular, every connected locally compact group satisfies this conclusion.

Our aim here is to find new classes of spaces and topological groups that behave similarly when embedded in a separable Hausdorff space.

\begin{thm}\label{Th:4}
Let $X$ be a regular Lindel\"of $\Sigma$-space. If $X$ admits a homeomorphic embedding into a separable Hausdorff space, then $w(X)\leq\cont$.
\end{thm}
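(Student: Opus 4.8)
The plan is to bound $w(X)$ by reducing to the situation already handled for compact subspaces of separable Hausdorff spaces together with the machinery of Theorem~\ref{Th:1}. So let $X$ be a regular Lindel\"of $\Sigma$-space homeomorphically embedded into a separable Hausdorff space $Z$; we may assume $X\subseteq Z$. By Theorem~\ref{Th:1} it suffices to show $nw(X)\leq\cont$, since then $w(X)\leq nw(X)^{Nag(X)}\leq\cont^\om=\cont$. Equivalently, by Corollary~\ref{Cor:3} (or rather its proof, which only uses the Lindel\"of $\Sigma$ property of $X$ itself), it is enough to produce a continuous bijection of $X$ onto a Tychonoff space of weight $\leq\cont$, i.e.\ to show $iw(X)\leq\cont$.

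The first main step is to exploit the Lindel\"of $\Sigma$ structure: fix a countable family $\mathcal{F}$ of closed subsets of $\beta X$ separating points of $X$ from $\beta X\setminus X$, and for $x\in X$ set $C(x)=\bigcap\{F\in\mathcal{F}:x\in F\}$, a compact subset of $X$. The family $\{C(x):x\in X\}$ is a cover of $X$ by at most countably many "types" of compacta in the sense that there is a countable collection $\mathcal{K}$ of compact subsets of $X$ with $X=\bigcup\mathcal{K}$ and each $K\in\mathcal{K}$ of the form $\bigcap$ of a subfamily of $\mathcal{F}$ — more precisely $\mathcal{K}$ can be taken to be all finite intersections from $\mathcal{F}$ that are contained in $X$, which is a countable family. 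Each such $K$ is a compact subspace of the separable Hausdorff space $Z$, so by \cite[Lemma~3.4]{LMT} we get $w(K)\leq\cont$ for every $K\in\mathcal{K}$.

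The second main step is to patch these weight estimates on the countably many compacta into a single estimate for $iw(X)$. Since $X$ is regular (hence Tychonoff) and Lindel\"of $\Sigma$, it is in particular normal and $\om$-stable; the idea is that a compact-valued upper semicontinuous countably-many-to-one "resolution" of $X$ by compacta of weight $\leq\cont$ forces $nw(X)\leq\cont$. Concretely: for each $K\in\mathcal{K}$ pick a continuous injection $g_K\colon K\to M_K$ onto a compact metrizable-weight-$\leq\cont$ space, or more simply choose for each $K$ a family $\mathcal{B}_K$ of cozero sets of $X$ with $|\mathcal{B}_K|\leq\cont$ whose traces on $K$ form a base for $K$; using normality of $X$ one extends, and then $\mathcal{B}=\bigcup_{K\in\mathcal{K}}\mathcal{B}_K$ is a family of cozero sets of $X$ with $|\mathcal{B}|\leq\cont$. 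A standard argument (using that every point of $X$ lies in some $K\in\mathcal{K}$ and that the $C(x)$ separate $x$ from the rest of $\beta X$, so cozero sets of $X$ coming from $\mathcal{B}$ together with the countably many $F$'s separate points and closed sets of $X$) shows that the diagonal map of $X$ into $[0,1]^{\mathcal{B}}$ has a Tychonoff image of weight $\leq\cont$; since the trace base is separating, this map is injective, giving $iw(X)\leq\cont$.

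The step I expect to be the main obstacle is precisely this patching: getting from "$X$ is covered by countably many compacta each of weight $\leq\cont$" to "$iw(X)\leq\cont$" is not formal — a countable union of nice compacta can fail to have small network weight without some extra structure. The extra structure here is that the cover $\{C(x)\}$ arises from a countable family $\mathcal{F}$ of closed sets of $\beta X$ \emph{separating} $X$ from $\beta X\setminus X$, so that the $F$'s themselves contribute a countable separating family of closed sets, and one only needs, on each compactum, a base of size $\leq\cont$ consisting of cozero sets of $X$; regularity/normality of $X$ is what lets these local bases be realized by global cozero sets, and stability of $X$ (Lindel\"of $\Sigma$ $\Rightarrow$ $\om$-stable, and more generally $\kappa$-stable) is the tool that upgrades $iw(X)\leq\cont$ to $nw(X)\leq\cont$. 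Once $nw(X)\leq\cont$ is in hand, Theorem~\ref{Th:1} closes the argument: $w(X)\leq |C(X)|\leq nw(X)^{Nag(X)}\leq\cont^\om=\cont$.
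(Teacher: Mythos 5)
Your overall plan (reduce to $nw(X)\le\cont$ and then invoke Theorem~\ref{Th:1}) is the right one, and the paper's own proof is exactly that reduction; but the paper gets $nw(X)\le\cont$ in one step, because the cited Lemma~3.4 of \cite{LMT} applies to \emph{Lindel\"of} (not just compact) subspaces of separable Hausdorff spaces and directly bounds the network weight by $\cont$. Your attempt to rederive this from the compact case contains a genuine error: a Lindel\"of $\Sigma$-space is \emph{not} covered by countably many compacta (the irrationals are Lindel\"of $\Sigma$ but not $\sigma$-compact), and your proposed countable family $\mathcal{K}$ of ``finite intersections from $\mathcal{F}$ contained in $X$'' does not work --- a finite intersection of members of $\mathcal{F}$ need not lie in $X$, and such intersections need not cover $X$. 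The sets $C(x)=\bigcap\{F\in\mathcal{F}:x\in F\}$ are \emph{countable} intersections, and the family $\{C(x):x\in X\}$ has cardinality at most $2^{|\mathcal{F}|}=\cont$, not $\om$; this is precisely the count used in the proof of Theorem~\ref{Th:3b}.

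Fortunately the error is harmless, because countability is never needed: once you know $X=\bigcup\{C(x):x\in X\}$ with at most $\cont$ distinct compacta $C(x)$, each compact in the separable Hausdorff space $Z$ and hence of weight $\le\cont$ by the compact case of \cite[Lemma~3.4]{LMT}, you are done, since network weight is additive over unions of subspaces: the union of networks of the $C(x)$ is a network for $X$, so $nw(X)\le\cont\cdot\cont=\cont$. This also makes your entire ``patching'' discussion (cozero bases, the diagonal map into $[0,1]^{\mathcal{B}}$, the detour through $iw(X)$ and $\om$-stability) unnecessary; the step you flag as the main obstacle is in fact formal for network weight, as opposed to weight. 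With these corrections your argument is a valid alternative to the paper's proof, trading the Lindel\"of version of the external lemma for its compact version plus the Nagami decomposition.
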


\begin{proof}
Assume that $X$ is a subspace of a separable Hausdorff space. Since $X$ is Lindel\"of, it follows from \cite[Lemma~3.4]{LMT} that $nw(X)\leq\cont$. Hence Theorem~\ref{Th:1} implies that $w(X)\leq\cont$.
\end{proof}

The original definition of Lindel\"of $\Sigma$-spaces given in \cite{Nag} requires only the Hausdorff separation property. Let us recall that definition. A \textit{Hausdorff} space $X$ is a \textit{Lindel\"of $\Sigma$-space} if there exist families $\mathcal{F}$ and $\mathcal{C}$ of closed subsets of $X$ with the following properties:
\begin{enumerate}
\item[(i)]   $\mathcal{F}$ is countable;
\item[(ii)]  every element of $\mathcal{C}$ is compact and $X=\bigcup\mathcal{C}$;
\item[(iii)] for every $C\in\mathcal{C}$ and every open set $U$ in $X$ with $C\sub U$, there exists $F\in\mathcal{F}$ such that $C\sub F\sub U$. 
\end{enumerate}

In the class of Tychonoff spaces, the above definition of Lindel\"of $\Sigma$-spaces and the definition given in Subsection~\ref{SubS} coincide.

It is now natural to ask whether \lq{regular\rq} can be dropped in Theorem~\ref{Th:4}:

\begin{problem}\label{Prob:LS}
Is it true that every Lindel\"of $\Sigma$-space $X$ homeomorphic to a subspace of a separable Hausdorff space satisfies $w(X)\leq\cont$? 
\end{problem}

Another instance of the phenomenon similar to Theorem~\ref{Th:4} is provided by Lindel\"of $P$-spaces. First we present an auxiliary lemma.

\begin{lemma}\label{Le:aux}
Let $f\colon Y\to Z$ be a continuous mapping of Lindel\"of $P$-spaces. If $Y$ and $Y$ are Hausdorff, then $f$ is closed. Therefore, if $f$ is one-to-one and onto, then it is a homeomorphism.
\end{lemma}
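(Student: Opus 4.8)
The plan is to show that $f$ is closed by taking an arbitrary closed set $A\subset Y$ and a point $z\in Z\setminus f(A)$, and producing an open neighborhood of $z$ disjoint from $f(A)$. First I would set $B=f^{-1}(z)$, which is a closed subset of $Y$ disjoint from $A$. The key idea is that in a Lindel\"of $P$-space, Lindel\"of subspaces can be separated from disjoint closed sets in a very strong way: since $Y$ is Hausdorff and a $P$-space, for each $b\in B$ and each $a\in A$ there are disjoint open sets, and using the $P$-space property one can in fact find, for each $b\in B$, an open set $V_b\ni b$ whose closure misses $A$ — indeed, for each $a$ pick disjoint open $O_{a}\ni b$, $O_a'\ni a$; the intersection of countably many of the $O_a$ is still open (P-space), and because $A$ is Lindel\"of, countably many of the $O_a'$ cover $A$, so the corresponding countable intersection $V_b$ of the $O_a$'s is an open neighborhood of $b$ with $\overline{V_b}\cap A=\emptyset$ (in a $P$-space a countable intersection of open sets disjoint-in-closure statements behaves well; more carefully, $V_b\cap A=\emptyset$ and we then further shrink). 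Then $\{V_b : b\in B\}$ covers the Lindel\"of set $B$, so countably many $V_{b_n}$ cover $B$; let $V=\bigcup_n V_{b_n}$, an open set containing $B=f^{-1}(z)$ with $V\cap A=\emptyset$.

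Next I would pass this separation down to $Z$. The set $W=Z\setminus f(Y\setminus V)$ is a candidate neighborhood of $z$: we have $z\in W$ because $f^{-1}(z)\subset V$ means $z\notin f(Y\setminus V)$, and $W\cap f(A)=\emptyset$ because $A\subset Y\setminus V$. The remaining point is that $W$ is open, i.e.\ that $f(Y\setminus V)$ is closed — but that is exactly the statement that $f$ maps the closed set $Y\setminus V$ to a closed set, which is what we are trying to prove, so a naive argument is circular. The fix is to invoke the Lindel\"of property of the domain together with the $P$-space structure once more: since $Y$ is Lindel\"of and $f$ is continuous, $Z$ is Lindel\"of, and Lindel\"of $P$-spaces are normal (indeed, Lindel\"of regular $P$-spaces are paracompact, and even Hausdorff ones admit enough separation via $G_\delta$-closed sets). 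Rather than route through $f(Y\setminus V)$ directly, I would instead separate $z$ from $f(A)$ in $Z$ using that $f(A)$ is Lindel\"of and $\{z\}$ is a $G_\delta$ hence closed set; the preimages pull back the cover and the $P$-space-plus-Lindel\"of argument of the first paragraph, applied now in $Z$, yields an open $W\ni z$ with $W\cap f(A)=\emptyset$. This gives $f(A)$ closed, hence $f$ closed; and a continuous closed bijection is a homeomorphism, giving the final assertion.

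The main obstacle I anticipate is making the $G_\delta$/closure-disjointness bookkeeping in the first paragraph genuinely correct in the merely Hausdorff (not regular) setting: one must be careful that countable intersections of open sets are open (the $P$-space hypothesis) and that one really obtains $\overline{V_b}\cap A=\emptyset$ and not just $V_b\cap A=\emptyset$ — in a Hausdorff $P$-space disjoint closed sets, one of which is Lindel\"of, can be separated by open sets, and this "$T_4$-like" statement for the relevant configuration is the crux. So the heart of the proof is the following observation, which I would isolate as the engine: \emph{in a Hausdorff $P$-space, if $A$ is a closed Lindel\"of subset and $b\notin A$, then there is an open $V\ni b$ with $V\cap A=\emptyset$, and moreover if $B$ is a closed Lindel\"of set disjoint from the closed Lindel\"of set $A$ then $B$ and $A$ have disjoint open neighborhoods.} Granting this engine, the rest is the routine "push down along $f$" argument sketched above. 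I would also double-check the typo-laden hypothesis: the lemma surely means "$Y$ and $Z$ are Hausdorff" (both Lindel\"of $P$-spaces), and the conclusion "$f$ is closed" should be read with that correction.
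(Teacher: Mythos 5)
Your argument is correct, but it takes a genuinely different route from the paper's. The paper first quotes an external result (Lemma~5.3 of the Sanchis--Tkachenko reference) to the effect that Hausdorff Lindel\"of $P$-spaces are zero-dimensional; it then takes $z\in\overline{f(F)}$, considers the family $\mathcal{N}(z)$ of clopen neighborhoods of $z$ (which is closed under countable intersections and satisfies $\bigcap\mathcal{N}(z)=\{z\}$), pulls it back to the centered, countably-closed family $\{F\cap f^{-1}(V):V\in\mathcal{N}(z)\}$ of nonempty closed sets, and uses the Lindel\"of property of $Y$ to conclude $F\cap f^{-1}(z)\neq\emptyset$. Your proof instead rests on the single observation that \emph{every Lindel\"of subspace of a Hausdorff $P$-space is closed}: given $z\notin L$ with $L$ Lindel\"of, Hausdorffness gives disjoint open $O_a\ni z$, $O_a'\ni a$ for each $a\in L$, Lindel\"ofness extracts a countable subcover $\{O_{a_n}'\}$ of $L$, and the $P$-space property makes $\bigcap_n O_{a_n}$ an open neighborhood of $z$ missing $L$. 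Since $f(F)$ is a continuous image of the Lindel\"of set $F$, it is Lindel\"of and hence closed in $Z$, and you are done. This is more elementary and self-contained (no zero-dimensionality, no external citation), at the cost of nothing; the paper's route buys the incidental information that the spaces are zero-dimensional, which it does not use elsewhere. Two small criticisms: your entire first paragraph (the sets $B=f^{-1}(z)$, $V_b$, $V$, and the candidate $W=Z\setminus f(Y\setminus V)$) is a detour you yourself recognize as circular and then abandon --- the clean proof is just the second-paragraph argument, and the worry about $\overline{V_b}\cap A=\emptyset$ versus $V_b\cap A=\emptyset$ is moot since only the latter is ever needed; and the phrase ``the preimages pull back the cover'' is superfluous, since the separation of $z$ from $f(A)$ happens entirely inside $Z$. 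You are right that the hypothesis should read ``$Y$ and $Z$ are Hausdorff.''
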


\begin{proof}
First we show that the mapping $f$ is closed. It follows from \cite[Lemma~5.3]{ST} that the spaces $Y$ and $Z$ are zero-dimensional. Let $F$ be a non-empty closed subset of $Y$ and take a point $z\in\overline{f(F)}$. Denote by $\mathcal{N}(z)$ the family of clopen neighborhoods of $z$ in $Z$. Since $Z$ is a zero-dimensional $P$-space, the family $\mathcal{N}(z)$ is closed under countable intersections and $\bigcap\mathcal{N}(z)=\{z\}$. It follows from our choice of $z$ that the family $\{F\cap f^{-1}(V): V\in\mathcal{N}(z)\}$ of non-empty closed sets in $Y$ is closed under countable intersections as well. Since the space $Y$ is Lindel\"of, we conclude that $F\cap f^{-1}(z)\neq\emp$ and hence $z\in f(F)$. This proves that $f$ is a closed mapping. The last assertion of the lemma is evident.
\end{proof}

\begin{thm}\label{Th:5}
If a Lindel\"of $P$-space $X$ is homeomorphic to a subspace of a separable Hausdorff space, then $w(X)\leq\cont$.
\end{thm}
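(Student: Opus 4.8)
The plan is to mimic the proof of Theorem~\ref{Th:4}, but since a Lindel\"of $P$-space need not be a Lindel\"of $\Sigma$-space, I cannot invoke Lemma~3.4 of \cite{LMT} together with Theorem~\ref{Th:1} directly. Instead I would reduce to the metrizable/zero-dimensional situation via Lemma~\ref{Le:aux}. First, observe that a Lindel\"of $P$-space $X$ is automatically Tychonoff (indeed zero-dimensional, by \cite[Lemma~5.3]{ST}), so the blanket hypothesis of Subsection~\ref{SubS} is not an issue. Let $Z$ be a separable Hausdorff space with $X\subseteq Z$, and let $D$ be a countable dense subset of $Z$.

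The key step is to produce from $X$ a continuous one-to-one map onto a space of weight at most $\cont$, i.e.\ to bound $iw(X)$, and then upgrade. For each $d\in D$ and each $x\in X$ with $x\neq d$ (hence for all $x$ if $d\notin X$), Hausdorffness of $Z$ gives disjoint open sets; intersecting with $X$ one gets, for every pair of distinct points of $X$, some member of a fixed family of size $\leq\cont$ of open subsets of $X$ separating them — concretely, fix for each $d\in D$ a countable (or size-$\cont$) collection witnessing that $d$ can be separated by open sets of $Z$ from points of $X$, in the spirit of the argument behind \cite[Lemma~3.4]{LMT}. This yields a $T_0$ (indeed Hausdorff) family $\mathcal{B}$ of cozero sets of $X$ with $|\mathcal{B}|\leq\cont$ that separates points of $X$; the diagonal map into $[0,1]^{\mathcal{B}}$ is then a continuous injection of $X$ into a space of weight $\leq\cont$. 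Composing with the natural projection onto a metrizable quotient is not needed — I just need $iw(X)\leq\cont$. Then let $Y$ be $X$ retopologized as the $P$-space generated by (a cozero base of) that weight-$\leq\cont$ image; more carefully, let $Z'$ be the image of the continuous injection $X\to [0,1]^{\mathcal B}$, take its $P$-modification $Z'_\om$, which has weight $\leq w(Z')^\om\leq\cont^\om=\cont$, and let $j\colon X\to Z'_\om$ be the (still continuous, since $X$ is a $P$-space) induced map. Now both $X$ and $Z'_\om$ are Lindel\"of Hausdorff $P$-spaces — $Z'_\om$ is Lindel\"of because it is a subspace of the $P$-modification of the compact space $[0,1]^{\mathcal B}$, wait, this needs care; alternatively restrict to the closure — and $j$ is a continuous bijection of $X$ onto its image $j(X)\subseteq Z'_\om$. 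Since $j(X)$ is a continuous image of the Lindel\"of space $X$ it is Lindel\"of, and it is a $P$-space as a subspace of $Z'_\om$; by Lemma~\ref{Le:aux}, $j$ is a homeomorphism. Hence $w(X)=w(j(X))\leq w(Z'_\om)\leq\cont$.

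The main obstacle is the Lindel\"of-ness bookkeeping in the middle: I must be sure that the space onto which $X$ injects, after taking its $P$-modification, is still Lindel\"of (so that Lemma~\ref{Le:aux} applies). The clean fix is to never leave $X$: set $Y=j(X)$ with the subspace topology from $Z'_\om$; then $Y$ is Lindel\"of (continuous image of $X$) and a $P$-space, $w(Y)\leq w(Z'_\om)\leq\cont$, and $j\colon X\to Y$ is a continuous bijection between Lindel\"of Hausdorff $P$-spaces, so a homeomorphism by Lemma~\ref{Le:aux}, giving $w(X)\leq\cont$. The only genuinely substantive input beyond Lemma~\ref{Le:aux} is the estimate $iw(X)\leq\cont$, which is exactly where separability of the ambient Hausdorff space is used and is essentially \cite[Lemma~3.4]{LMT} applied not to a compact subspace but to an arbitrary one — so I would either cite the relevant separation argument from \cite{LMT} verbatim or reprove it in two lines: for distinct $x,y\in X$ pick $d\in D$ in the open set (from Hausdorffness of $Z$) separating $x$ from $y$, record the trace on $X$ of a $Z$-open neighbourhood of $d$ missing one of $x,y$; the totality of such traces, indexed by $D$ and by a point of $X$ to be avoided, has size $\leq\cont$ and separates points, yielding the desired injection into $[0,1]^{\cont}$.
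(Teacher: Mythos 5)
Your proposal is correct in outline and, in its decisive second half, coincides with the paper's proof: both arguments manufacture a Hausdorff topology of weight at most $\cont$ that is weaker than the topology of $X$, pass to its $P$-modification (which still has weight at most $\cont^\om=\cont$ and is still weaker than the topology of $X$, precisely because $X$ is a $P$-space), and then invoke Lemma~\ref{Le:aux} to conclude that the resulting continuous bijection is a homeomorphism. Your instinct to ``never leave $X$'' is exactly the right fix and is what the paper does: it restricts the $P$-modified topology to $X$ itself, so both sides of the bijection are Lindel\"of and Lemma~\ref{Le:aux} applies. The only difference of substance is where the weight-$\cont$ condensation comes from: the paper extracts it directly from the ambient separable space $Y$ via the family $\mathcal{B}=\{\Inte_Y\overline{U}: U \text{ open in } Y\}$, which has at most $2^{|D|}=\cont$ members because $\overline{U}=\overline{U\cap D}$, and which is a base for a weaker Hausdorff topology on all of $Y$; you instead route through $iw(X)\leq\cont$ and a condensation into $[0,1]^{\cont}$.

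The one genuine gap is in your improvised ``two-line reproof'' of the separation estimate. A family of traces indexed by $d\in D$ together with a point of $X$ to be avoided has cardinality $|D|\cdot|X|$, and $|X|$ may be as large as $2^{\cont}$ for a Lindel\"of $P$-subspace of a separable Hausdorff space (consider one-point Lindel\"ofications of large discrete sets), so the asserted bound $|\mathcal{B}|\leq\cont$ does not follow from the count you give; nor is there any a priori reason that only $\cont$ distinct traces occur. To get the bound you must parametrize the separating sets by subsets of $D$ rather than by points of $X$ --- this is exactly what the paper's identity $\Inte_Y\overline{U}=\Inte_Y\overline{U\cap D}$ accomplishes --- or else simply cite \cite[Lemma~3.4]{LMT}, which the paper itself uses to record that every Lindel\"of subspace of a separable Hausdorff space has a network of cardinality $\leq\cont$, whence $iw(X)\leq nw(X)\leq\cont$ since $X$ is Tychonoff. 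With that repair, the remaining steps you list ($j(X)$ is Lindel\"of as a continuous image, a $P$-space as a subspace of a $P$-modification, and Hausdorff since its topology refines a subspace topology of $[0,1]^{\cont}$) all go through.
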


\begin{proof}
Let $Y$ be a separable Hausdorff space containing $X$ as a subspace. Denote by $D$ a countable dense subset of $Y$ and consider the family 
\[
\mathcal{B}=\{\Inte_{Y}\overline{U}: U \mbox{ is open in } Y\}.
\]
Since $U\cap D$ is dense in $U$ for every open set $U$ in $Y$, we see that $|\mathcal{B}|\leq\cont$. It is easy to verify that the family $\mathcal{B}$ constitutes a base for a weaker topology on $Y$, say, $\sigma$. Since the original space $Y$ is Hausdorff, so is $(Y,\sigma)$. Let
$$
\mathcal{C} = \left\{ \bigcap\gamma: \gamma\subset\mathcal{B},\ |\gamma|\leq\om \right\}.
$$
Then $\mathcal{C}$ is a base for a topology $\sigma_\om$ on $Y$ called the \textit{P-modification} of the topology $\sigma$. Notice that $|\mathcal{C}|\leq |\mathcal{B}|^\om \leq\cont$. Since $X$ is a $P$-space, the restriction of $\sigma_\om$ to $X$, say, $\sigma_\om(X)$ is weaker than the original topology of $X$ inherited from $Y$. Hence $X'=(X,\sigma_\om(X))$ is a Lindel\"of $P$-space. It is clear that the space $X'$ is Hausdorff since $\sigma\subset\sigma_\om$.

Let $id_X$ be the identity mapping of $X$ onto $X'$. Then $id_X$ is a continuous bijection of Lindel\"of $P$-spaces, so Lemma~\ref{Le:aux} implies that $id_X$ is a homeomorphism. Since $|\mathcal{C}|\leq\cont$, we conclude that $w(X)=w(X')\leq\cont$.
\end{proof}

The next problem arises in an attempt to generalize both Theorems~\ref{Th:4} and~\ref{Th:5}.

\begin{problem}\label{Prob:1}
Is it true that every regular Lindel\"of subspace of a separable Hausdorff space has weight less than or equal to $\cont$?
\end{problem}

Let us note that every Lindel\"of subspace of a separable Hausdorff space has a network of cardinality $\leq\cont$, by \cite[Lemma~3.4]{LMT}.

It is natural to ask whether a precompact or countably compact topological group $G$ satisfies $w(G)\leq\cont$ or $w(G)\leq 2^\cont$ provided it is homeomorphic to a subspace of a separable Hausdorff space. We answer this question in the negative. This requires a simple lemma.

\begin{lemma}\label{Le:HS}
Let $i\colon Y\to X$ be a continuous bijection of spaces. If $X$ is homeomorphic to a subspace of a separable Hausdorff space, so is $Y$.  
\end{lemma}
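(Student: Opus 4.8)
The plan is to compose the given bijection with a suitable embedding of $X$ into a separable Hausdorff space and then equip the result with the topology that makes it work. First I would let $i\colon Y\to X$ be the continuous bijection and let $j\colon X\to Z$ be a homeomorphic embedding of $X$ into a separable Hausdorff space $Z$. Composing gives a continuous bijection $h=j\circ i\colon Y\to h(Y)$, where $h(Y)$ carries the subspace topology from $Z$; note $h(Y)$ is separable Hausdorff (it is even a subspace of $Z$). So we have a continuous bijection from $Y$ onto a separable Hausdorff space, but this is not yet quite an embedding of $Y$, since $h$ need not be a homeomorphism onto its image.

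To repair this, the idea is to use the product. Consider the diagonal-type map $\Delta\colon Y\to Y\times h(Y)$ — but of course $Y$ itself is the space we are trying to place, so instead I would work inside a space we already control. Here is the cleaner route: take the underlying set of $Y$ and give it the topology generated by the original topology of $Y$ together with the topology pulled back via $h$ from $h(Y)\subseteq Z$. Since $h$ is already continuous, the pulled-back topology is coarser than that of $Y$, so this supremum topology is just the topology of $Y$ itself; that gives nothing new. So the genuine trick must be different: we should embed $Y$ not into $Z$ but into $Y_w\times Z$, where $Y_w$ is $Y$ with a coarser separable Hausdorff topology, and then observe the diagonal map $y\mapsto (y,h(y))$ is an embedding of $Y$ provided the target is Hausdorff and separable. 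Concretely, one wants a coarser topology on $Y$ that is separable and Hausdorff; then $Y$ embeds diagonally into that coarse space times $Z$.

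Actually the simplest correct argument: since $i\colon Y\to X$ is a continuous bijection, $Y$ with the topology $i^{-1}(\tau_X)$ pulled back from $X$ is homeomorphic (via $i$) to $X$, hence to the subspace $j(X)$ of the separable Hausdorff space $Z$; call this coarser space $Y_0$, so $Y_0$ is separable Hausdorff and the identity $Y\to Y_0$ is continuous. Now form the map $e\colon Y\to Y\times Y_0$... no — again $Y$ on the right is the problem. The right move is: $Y$ need only embed into \emph{some} separable Hausdorff space, and $Y_0$ together with any finer separable piece would do, but we have no finer separable Hausdorff topology on $Y$ a priori. Therefore the conclusion as stated must really be proved by a diagonal into $Z\times (\text{something separable Hausdorff refining }Y)$, and the only such something available is $Z$ again via $h$. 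I would therefore expect the intended proof to be: let $Z$ be separable Hausdorff with $X\subseteq Z$; then $h=i$ followed by inclusion maps $Y$ continuously and bijectively onto the subspace $i(Y)=X\subseteq Z$; but a continuous bijection need not be an embedding, so one passes to $Z\times Z$ (still separable Hausdorff) and the graph map $y\mapsto (h(y),h(y))$, which is still not an embedding. The genuine fix is to take $Z'=Z$ with the topology, and use that a continuous bijection onto a Hausdorff space, composed with the diagonal into the \emph{square}, where the second coordinate carries the original topology of $Y$ — which is impossible to realize in $Z$.

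Given these circularity issues, I expect the actual proof to be essentially trivial once one reads the statement correctly: $X$ embeds in a separable Hausdorff space $Z$, so $Z$ \emph{contains} a homeomorphic copy of $X$; and $i\colon Y\to X$ is a continuous bijection, so it need not help directly — \textbf{unless} "continuous bijection" is being used in the sense that yields, via the product $Y\times Z$, a separable Hausdorff space (because $Y$ maps into $Z$ continuously and $Y\to Y$ is identity), and the diagonal $y\mapsto(y, i(y))\in Y\times Z$ embeds $Y$; then one only needs $Y\times Z$ to be separable Hausdorff, which requires $Y$ separable. So in fact the correct and intended reading is: we do not need $Y\times Z$; rather, since $i$ is a continuous bijection onto $X\subseteq Z$, the space $Y$ carries \emph{two} topologies — its own $\tau_Y$ and the coarser $\tau_0=i^{-1}(\tau_X)$; the space $(Y,\tau_0)$ is separable Hausdorff (being a copy of $X\subseteq Z$), but we want $(Y,\tau_Y)$ embedded. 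The main obstacle, and presumably the one-line resolution in the paper, is this: every finer topology is irrelevant, and indeed the lemma is FALSE unless $Y$ is already separable — so I would conclude the proof must simply observe that $Y$, via $i$, is homeomorphic to \emph{nothing finer}, hence the statement as I am reading it is the claim that embeddability is inherited by the \emph{domain} of a continuous bijection, which holds precisely because one takes the diagonal into $Z\times Z_{\mathrm{indiscrete\ refinement}}$; since I cannot make this rigorous without the authors' conventions, the plan is: (1) pull back a separable Hausdorff topology to $Y$ via $i$; (2) note the graph/diagonal embedding $y\mapsto (i(y), i(y))$ into $Z\times Z$; (3) if needed, refine using that $i$ being a continuous bijection of the relevant spaces (here Lindel\"of $P$-spaces, cf. Lemma~\ref{Le:aux}) forces $i$ to be a homeomorphism, making the claim immediate. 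The main obstacle is pinning down why a continuous bijection suffices rather than a homeomorphism; I expect the resolution to invoke that in the application $i$ is in fact a homeomorphism (via Lemma~\ref{Le:aux}), so Lemma~\ref{Le:HS} is only ever applied to homeomorphisms and its proof reduces to: $Y\cong X\hookrightarrow Z$, done.
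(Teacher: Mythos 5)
Your proposal does not arrive at a proof: it is a sequence of attempted constructions each of which you correctly recognize as circular, ending with the guess that the lemma ``must'' only ever be applied to homeomorphisms and so reduces to $Y\cong X\hookrightarrow Z$. That fallback is false. In the paper the lemma is applied (Proposition~\ref{Pro:Emb2}) to a continuous isomorphism $f\colon G\to H$ where $w(G)=2^{2^\cont}$ and $w(H)\leq\cont$; this $f$ is emphatically not a homeomorphism, and $G$ itself has density $2^\cont$, so it is not separable. Your intermediate assertion that ``the lemma is FALSE unless $Y$ is already separable'' is also wrong, and it points to exactly the conceptual step you are missing: a subspace of a separable \emph{Hausdorff} (non-regular) space need not be separable, so there is no need to find a countable dense subset of $Y$, nor a finer separable Hausdorff topology on the set $Y$. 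The dense set can, and must, consist of points \emph{outside} $Y$.

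The idea you are missing is the following resolution-type construction. Embed $X$ in a separable Hausdorff $H$; after replacing $H$ by $H^\om$ one may assume $X$ is nowhere dense, so $H$ has a countable dense set $D$ with $D\cap X=\emp$. Put $K=X\cup D\subset H$ and extend $i$ to the bijection $f\colon L=Y\cup D\to K$ which is $i$ on $Y$ and the identity on $D$. Give $L$ the coarsest topology $\sigma$ making $f$ continuous and making every set $U\cup D$ open, for $U$ open in $Y$. Basic open sets are $(U\cup D)\cap f^{-1}(V)$ with $U$ open in $Y$ and $V$ open in $K$. Hausdorffness of $(L,\sigma)$ comes for free from the coarse part: $f$ is a continuous injection into the Hausdorff space $K$. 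The subspace topology on $Y$ is its original one: the coarse part contributes only $i^{-1}(V\cap X)$, which is $Y$-open by continuity of $i$, and the sets $U\cup D$ restrict to exactly the $Y$-open sets. Finally --- and this is the punchline that defeats all your circularity worries --- $D$ is dense in $(L,\sigma)$ because every $Y$-open set is padded by the whole of $D$, so a nonempty basic open set meets $D$ in $f^{-1}(V)\cap D=V\cap D\neq\emp$. In other words, the fine topology of $Y$ is made invisible to the dense set by attaching all of $D$ to each $Y$-open set, while the Hausdorff separation is delegated entirely to the coarse pullback topology. Without this device your approach cannot be completed.
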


\begin{proof}
Let $H$ be a separable Hausdorff space containing $X$ as a subspace. First, we can replace $H$ with the separable space $H^\om$ and consider a copy of $X$ embedded in the first factor $H_{0}=H$, if necessary, thus guaranteeing that $X$ is embedded as a nowhere dense subspace. Let $D$ be a countable dense subset of $H$. Since $X$ is nowhere dense in $H$, the complement $D\setminus \overline{X}$ is also dense in $H$. Hence we can additionally assume that $D\cap X=\emptyset$.

Clearly $K=X\cup D$ is a dense subspace of $H$. We define a mapping $f$ of $L=Y\cup D$ onto $K$ by letting $f(y)=i(y)$ if $y\in Y$ and $f(d)=d$ if $d\in D$ (again we assume that $Y\cap D=\emp$). Then $f$ is a bijection. Let $\sigma$ be the coarsest topology on $L$ satisfying the following two conditions:
\begin{enumerate}
\item[(i)]   the mapping $f\colon (L,\sigma)\to K$ is continuous;
\item[(ii)]  if $U$ is open in $Y$, then $U\cup D$ is open in $(L,\sigma)$.
\end{enumerate}
Since $f$ is a bijection of $L$ onto $K$, it follows from (i) that the space $(L,\sigma)$ is Hausdorff, while (i) and (ii) together imply that the topology of $Y$ inherited from $(L,\sigma)$ is the original topology of $Y$. It is also easy to see that $D$ is dense in $L^*=(L,\sigma)$, i.e.~$L^*$ is separable. Indeed, our definition of $\sigma$ implies that the sets of the form $O=(U\cup D)\cap f^{-1}(V)$, with $U$ open in $Y$ and $V$ open in $K$, form a base for $L^*$. Suppose that $V\neq\emp$. Since the restriction of $f$ to $D$ is the identity mapping of $D$, it follows that $O\cap D=f^{-1}(V)\cap D=V\cap D\neq\emp$. This proves that $D$ is dense in $L^*$. Therefore, $Y$ is a subspace of the separable Hausdorff space $L^*$.
\end{proof}

\begin{prop}\label{Pro:Emb2}
There exists an $\om$-bounded (hence countably compact) topological Abelian group $G$ homeomorphic to a subspace of a separable Hausdorff space and satisfying $d(G)=2^\cont$ and $w(G)=2^{2^\cont}$.
\end{prop}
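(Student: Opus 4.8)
The plan is to build the group $G$ as the one from Example~\ref{Exa:2} (taking $\tau=\cont$, which is legitimate since $\cont^\om=\cont$), namely $G=C_p(\Pi_\om,\T)$ where $\Pi=2^I$ with $|I|=2^\cont$ and $\Pi_\om$ is its $P$-modification. By that example, $G$ is an $\om$-bounded Abelian topological group with $\psi(G)=\cont$, $d(G)=|G|=2^\cont$, and $w(G)=2^{2^\cont}$. So everything except the embedding into a separable Hausdorff space has already been verified; the only thing left is to produce such an embedding.

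To get the embedding I would exploit the continuous monomorphism $r_X\colon C_p(\Pi_\om,\T)\to C_p(X,\T)$ constructed at the end of the proof of Example~\ref{Exa:2}, where $X\subset\Pi$ is the countably compact subgroup of size $\le\tau=\cont$ meeting every nonempty $G_\delta$ of $\Pi$. The target $C_p(X,\T)$ is a subspace of $\T^X$ with $|X|\le\cont$, hence a separable space (a product of $\le\cont$ separable spaces is separable, by the Hewitt--Marczewski--Pondiczery theorem), and it is Tychonoff, in particular Hausdorff. Thus $r_X$ is a continuous bijection from $G$ onto a subspace $Y'=r_X(G)$ of the separable Hausdorff space $\T^X$. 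Now Lemma~\ref{Le:HS} applies directly: since $Y'$ (a subspace of a separable Hausdorff space) receives the continuous bijection $r_X\colon G\to Y'$, the lemma — applied with $Y=G$, $X=Y'$ — gives that $G$ itself is homeomorphic to a subspace of a separable Hausdorff space. Combined with the cardinal computations from Example~\ref{Exa:2}, this yields all the assertions.

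The only point requiring a little care is the direction of Lemma~\ref{Le:HS}: the lemma says that if the \emph{codomain} of a continuous bijection embeds in a separable Hausdorff space, then so does the \emph{domain}. Here the domain is $G$ and the codomain $r_X(G)\subset\T^X$ is what embeds, so the hypotheses match. I expect no real obstacle; the work is essentially bookkeeping, i.e.\ quoting Example~\ref{Exa:2} for the cardinal invariants and Lemma~\ref{Le:HS} for the embedding. If one wants to avoid invoking Lemma~\ref{Le:HS} as a black box one can instead describe the weak topology on $G\cup D$ explicitly as in that lemma's proof, but invoking the lemma is cleaner. Thus the proof reduces to two sentences: \emph{let $G$ be the group of Example~\ref{Exa:2} with $\tau=\cont$; by the final paragraph of that proof there is a continuous bijection of $G$ onto a subspace of the separable Hausdorff space $\T^X$, so Lemma~\ref{Le:HS} gives the conclusion, and the remaining equalities $d(G)=2^\cont$, $w(G)=2^{2^\cont}$ are part of Example~\ref{Exa:2}.}
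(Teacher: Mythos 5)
Your argument is correct, and its overall skeleton coincides with the paper's: take the group $G$ of Example~\ref{Exa:2} with $\tau=\cont$, exhibit a continuous bijection of $G$ onto a subspace of a separable Hausdorff space, and then invoke Lemma~\ref{Le:HS} (whose direction you apply correctly, with $G$ as the domain). The only genuine difference is how the separable target is produced. The paper proceeds abstractly: since $G$ is precompact, hence $\om$-narrow, \cite[Proposition~5.2.11]{AT} gives a continuous isomorphism onto a group $H$ with $w(H)\leq\psi(G)=\cont$; the completion $K$ of $H$ is a compact group of weight $\leq\cont$ and is therefore separable by \cite[Corollary~5.2.7(c)]{AT}, so $H$ sits in a separable (even normal) space. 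You instead recycle the concrete monomorphism $r_X\colon C_p(\Pi_\om,\T)\to C_p(X,\T)\subset\T^X$ already built at the end of Example~\ref{Exa:2}, with $|X|\leq\cont$, and get separability of $\T^X$ from the Hewitt--Marczewski--Pondiczery theorem. Your route is more self-contained and avoids the completion and the citation of separability of compact groups of weight $\leq\cont$ (which itself ultimately rests on the same Hewitt--Marczewski--Pondiczery argument); the paper's route is more robust in that it would apply verbatim to any precompact group with $\psi(G)\leq\cont$, without needing the particular $C_p$-presentation of $G$. Either way the cardinal equalities $d(G)=2^\cont$ and $w(G)=2^{2^\cont}$ are imported unchanged from Example~\ref{Exa:2}, so the proof is complete.
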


\begin{proof}
According to Example~\ref{Exa:2} with $\tau=\cont$, there exists an $\om$-bounded topological Abelian group $G$ satisfying $\psi(G)=\cont$, $d(G)=2^\cont$, and $w(G)=2^{2^\cont}$. It is clear that $G$ is precompact and hence $\om$-narrow. By \cite[Proposition~5.2.11]{AT}, we can find a continuous isomorphism $f\colon G\to H$ onto a topological group $H$ with $w(H)\leq \psi(G)=\cont$. The group $H$ is precompact and Abelian. Let $K$ be the completion of $H$. Then the group $K$ is compact and, by Lemma~\ref{Le:We}, it satisfies $w(K)=w(H)\leq\cont$. Applying \cite[Corollary~5.2.7(c)]{AT} we deduce that the group $K$ is separable. Thus $H$ is a subspace of a separable Hausdorff (in fact, normal) space. By Lemma~\ref{Le:HS}, $G$ is homeomorphic to a subspace of a separable Hausdorff space.
\end{proof}

%%%%%%%%%%%%%%%%%%%%%%%%%%%%%%%%%%%%%%

\end{document}